\newtheorem{thm}{Theorem}[section]
\newtheorem{lem}[thm]{Lemma}
\newtheorem{prop}[thm]{Proposition}
\newtheorem{mthm}{Main Theorem}
\theoremstyle{definition}
\newtheorem{defn}[thm]{Definition}
\newtheorem{rem}[thm]{Remark}
\numberwithin{equation}{section}
\newcommand\myeq{\mathrel{\stackrel{\makebox[0pt]{\mbox{\normalfont\tiny def}}}{=}}}
\def\d{{Dom}}
\def\n{\mathrm{nodes}}
\def\e{\mathrm{edges}}
\def\r{{Rng}}
\def\a#1{\mathfrak{#1}}
\begin{document}

\baselineskip=17pt


\title[Geometrical representation theorems]{Geometrical representation theorems for cylindric-type algebras}
\author[M. Khaled]{Mohamed Khaled}
\address{Mohamed Khaled\\ \ Bahcesehir University, Faculty of Engineering and Natural Sciences, Istanbul, Turkey \ \ \& \ \ Alfr\'ed R\'enyi Institute of Mathematics, Hungarian Academy of Sciences, Budapest, Hungary.}
\email{mohamed.khalifa@eng.bau.edu.tr}
\author[T. Sayed Ahmed]{Tarek Sayed Ahmed}
\address{Tarek Sayed Ahmed\\ \ Department of Mathematics, Faculty of Science, Cairo University, Giza, Egypt.}
\email{rutahmed@gmail.com}
\date{}
\begin{abstract}
In this paper, we give new proofs of the celebrated Andr\'eka-Resek-Thompson representability results of certain axiomatized cylindric-like algebras. Such representability results provide completeness theorems for variants of first order logic, that can also be viewed as multi-modal logics. The proofs herein are combinatorial and we also use some techniques from game theory.
\end{abstract}
\subjclass[2010]{Primary 03G15. Secondary 03G25, 03B45}
\keywords{cylindric algebras, representability, games and networks}
\maketitle
\section{Introduction}
Stone's representation theorem for Boolean algebras can be formulated in two, essentially equivalent ways. Every Boolean algebra is isomorphic to a field
of sets, or the class of Boolean set algebras can be axiomatized by a finite set of equations. As is well known, Boolean algebras constitute the algebraic
counterpart of propositional logic. Stone's representation theorem, on the other hand, is the algebraic equivalent of the completeness theorem for propositional logic.

Throughout, fix a finite ordinal $n\geq 2$. Cylindric algebras of dimension $n$ were introduced by A. Tarski \cite{hmt1} as the algebraic counterpart of first order logic restricted to $n$-many variables. 
Unfortunately, not every abstract cylindric algebra is representable as a field of sets, where the extra non-Boolean operations of cylindrifiers and diagonal
elements are faithfully represented by  projections and equalities. This is basically a reflection of the essential incompleteness of the finite variable fragments of first order logics.

In the present paper, we consider some variants of these cylindric algebras that were shown to have representation theorems. Thus, the corresponding logics are variants of first order logic that have completeness theorems. Such logics can be also viewed as multi dimensional modal logics, c.f. \cite{marxphd} and \cite{marx}. The following axioms are essentially due to D. Resek and R. Thompson \cite{resthm}.
\begin{defn}The class $\mathrm{RC}_n$ is defined to be the class of all algebras of the form $\a{A}=\langle A,+,\cdot,-,0,1,c_i,d_{ij}\rangle_{i,j\in n}$ that satisfy the axioms (Ax0) through (Ax7) below. Note that for each $i,j\in n$ with $i\not=j$, we have $s^i_ix\myeq x$ and $s^i_jx\myeq c_i(x\cdot d_{ij})$. 
\begin{enumerate}[(Ax{12})]
\item[(Ax0)] $\langle A,+,\cdot,-,0,1\rangle$ is a Boolean algebra.
\item[(Ax1)] $c_i0=0$, for each $i\in n$.
\item[(Ax2)] $x\leq c_ix$, for each $i\in n$.
\item[(Ax3)] $c_i(x\cdot c_iy)=c_ix\cdot c_iy $, for each $i\in n$. 
\item[(Ax4)] $d_{ii}=1$, for each $i\in n$. 
\item[(Ax5)] $d_{ik}\cdot d_{kj}\leq d_{ij}=d_{ji}=c_kd_{ji}$, for each $i,j,k\in n$ such that $k\not=i,j$.
\item[(Ax6)] $c_i(x\cdot d_{ij})\cdot d_{ij}\leq x$, for each $i,j\in n$ such that $i\not=j$.
\item[(Ax7)] $s^{i_m}_{j_m}c_{k_m}\cdots s^{i_1}_{j_1}c_{k_1}x\cdot\prod\{d_{l\tau(l)}:l\in K\}\leq c_ix$,\\
for each finite $m\geq 1$ and $i_1,\ldots,i_m\in n$ ,$j_1,\ldots,j_m\in n$, $k_1,\ldots, k_m\in n,i\in n$ such that $k_{t+1}\not\in([i_t/j_t]\circ\cdots\circ [i_1/j_1])^*K$, $t<m$, where $\tau=[i_m/j_m]\circ\cdots\circ[i_1/j_1]$ and $K=\{i_1,\ldots, i_m,k_1,\ldots, k_m\}\setminus\{i\}$.
\end{enumerate}
\end{defn}
\begin{rem}Let $\psi$ be any function. We denote its domain and its range by $\d(\psi)$ and $\r(\psi)$, respectively. For any $X\subseteq \d(\psi)$, by $\psi^{*}X$ we mean the set $\{\psi(x):x\in X\}$. Let $i,j\in n$. Then, $[i/j]$ denotes the transformation on $n$ that fixes everything except that it sends $i$ to $j$, while $[i,j]$ is the transformation that fixes everything except that $[i,j](i)=j$ and $[i,j](j)=i$.
\end{rem}
The following axioms are due to R. Thompson and H. Andr\'eka \cite{andthm,and2000}.
\begin{defn} We define $\mathrm{DC}_n=\{\a{A}\in\mathrm{RC}_n:\a{A}\models \text{(Ax8), (Ax9),(Ax{10})}\}$, $\mathrm{SC}_2=\{\a{A}\in \mathrm{DC}_2:\a{A}\models \text{(Ax{11})}\}$ and $\mathrm{SC}_{n}=\{\a{A}\in \mathrm{DC}_{n}:\a{A}\models \text{(Ax{12})}\}$ if $n\geq 3$, where:

\begin{enumerate}[(Ax{12})]
\item[(Ax{8})] $c_ic_j x\geq c_jc_i x\cdot d_{jk}$, for each $i,j,k\in n$ and $k\not=i, j$.
\item[(Ax{9})] $d_{ij}=c_k(d_{ik}\cdot d_{kj})$, for each $i,j,k\in n$ such that $k\not=i,j$.
\item[(Ax{10})] $s^k_is^i_js^j_ms^m_kc_kx=s^k_ms^m_is^i_js^j_kc_kx$, for each $i,j,k,m\in n$, $k\not=i,j,m$ and $m\not=i,j$.
\item[(Ax{11})] $x\cdot-d_{01}\leq c_0c_1(-d_{01}\cdot s^0_1c_1x\cdot s^1_0c_0x)$.
\item[(Ax{12})] $\displaystyle x\leq c_ic_j(s^i_jc_jx\cdot s^j_ic_ix\cdot\prod_{k\in n,k\not=i,j}s^k_is^i_js^j_kc_kx)$, for each $i,j\in n$.
\end{enumerate}
\end{defn}
In \cite{resthm}, D. Resk and R. Thompson proved a representation theorem for the class $\mathrm{RC}_n$. Despite the novelty of their proof, it was quite long and requires a deep knowledge of the literature of algebraic logic. A simpler proof was then provided by H. Andr\'eka (this proof can be found in \cite[Theorem 9.4]{monk}). Andr\'eka's method could also suggest an elegant proof for the representation of the algebras of $\mathrm{DC}_n$ \cite{andthm}. 

The representability of the $\mathrm{DC}_n$-algebras originally is due to R. Thompson, but his original proof was never published. In \cite{and2000}, H. Andr\'eka proved the representability of the algebras in $\mathrm{SC}_n$ by reducing the problem to the case of $\mathrm{DC}_n$ and then applying \cite{andthm}. 
Andr\'eka's representing structures were build using the step-by-step construction, which consists of treating defects one by one and then taking a limit where the contradictions disappear. 

In the present paper, we provide new proofs for the representation theorems of the classes defined above. We use games (and networks) as introduced to algebraic logic by R. Hirsch and I. Hodkinson, c.f. \cite{HH} and \cite{HHB}. Our proofs are relatively shorter than all the known proofs. We give direct constructions for all classes, even for $\mathrm{SC}_n$, unlike its original proof in \cite{and2000}.

The translation from step-by-step techniques to games is not a purely mechanical process. This transfer can well involve some ingenuity, in obtaining games that are transparent, intuitive and easy to grasp. The real advantage of the game technique is that games do not only build representations, when we know that such representations exist, but they also tell us when such representations exist, if we do not know a priori that they do.

Now, we give the formal definition of the representing concrete algebras. We start with the following basic notions. For every $i\in n$ and every two sequences $f,g$ of length $n$, we write $g\equiv_if$ if and only if $g=f(i/u)$, for some $u$, where, $f(i/u)$ is the sequence which is like $f$ except that it's value at $i$ equals $u$. Let $V$ be an arbitrary set of sequences of length $n$. For each $i,j\in n$ and each $X\subseteq V$, we define
$$C_i^{[V]}X=\{f\in V:(\exists g\in X)f\equiv_ig\}.$$ 
$$D_{ij}^{[V]}=\{f\in V:f(i)=f(j)\}.$$
When no confusion is likely, we omit the superscript $[V]$.
\begin{defn}The class of all relativized cylindric set algebras of dimension $n$, denoted by $\mathrm{RCs}_{n}$, is defined to be the class that consists of all subalgebras of the (full) algebras of the form, $$\a{P}(V)\myeq\langle \mathcal{P}(V), \cup,\cap,\setminus,\emptyset, V, C_i^{[V]}, D_{ij}^{[V]}\rangle_{i,j\in n},$$
where $V$ is a non-empty set of sequences of length $n$ and $\mathcal{P}(V)$ is the family of all subsets of $V$. For every $\a{A}\subseteq\a{P}(V)$, the set $V$ is called the unit of $\mathfrak{A}$, while the smallest set $U$ that satisfies $V\subseteq {^nU}$ is called the base of $\mathfrak{A}$, where ${^nU}$ is the set of all sequences of length $n$ whose range is subset of $U$.
\end{defn}
Let $f,g$ be two functions such that $\r(f)\subseteq \d(g)$, then $g\circ f$ denotes the function whose domain is $\d(f)$ and $g\circ f(x)=g(f(x))$. 
\begin{defn}
The class $\mathrm{DCs}_n$ of diagonalizable cylindric set algebras, of dimension $n$, consists of all algebras $\a{A}\in \mathrm{RCs}_n$ whose units $V$ are diagonalizable sets, i.e. for every $f\in V$ and every $i,j\in n$ we have $f\circ[i/j]\in V$. 
\end{defn}
\begin{defn}
The class $\mathrm{SCs}_n$ of locally squares cylindric set algebras, of dimension $n$, consists of all algebras $\a{A}\in \mathrm{DCs}_n$ whose units $V$ are permutable sets, i.e. for every $f\in V$ and every $i,j\in n$ we have $f\circ[i,j]\in V$. 
\end{defn}

In contrast with the literature, other notations for the classes $\mathrm{RCs}_n$, $\mathrm{DCs}_n$ and $\mathrm{SCs}_n$ are $\mathrm{Crs}_n$, $\mathrm{D}_n$ and $\mathrm{G}_n$, respectively. Let $\Lambda_n\myeq {^nn}$ be the  set of all transformations on $n$. Let $\Omega_n\myeq \{\tau\in\Lambda_n:\vert\r(\tau)\vert<n\}$, the set of all transformations on $n$ that are not permutations. Let $V$ be a set of sequences of length $n$. The following characterization is well known and can be verified easily using some simple facts of transformations.
\begin{itemize}
\item $V$ is diagonalizable if and only if $f\circ\tau\in V$, for every $f\in V$ and every $\tau\in\Omega_n$.
\item $V$ is diagonalizable and permutable if and only if $f\circ\tau\in V$, for every $f\in V$ and every $\tau\in\Lambda_n$.
\end{itemize}
For any class $\mathrm{K}$ of algebras, $\mathbf{I}\mathrm{K}$ is the class that consists of all isomorphic copies of the members of $\mathrm{K}$. As we mentioned before, we aim to reprove the following theorem.
\begin{mthm}\label{main}
Let $n\geq 2$ be a finite ordinal and let $\mathrm{K}\in\{\mathrm{RC},\mathrm{DC},\mathrm{SC}\}$. Then, $\mathrm{K}_n=\mathbf{I}\mathrm{Ks}_n$.
\end{mthm}
\section{Preliminary lemmas}
Recall the basic concepts of Boolean algebras with operators (BAO) from the literature, see e.g. \cite{tarski}. For any $\mathfrak{B}\in\mathrm{RC}_n$, let $At(\mathfrak{B})$ be the set of all atoms in $\mathfrak{B}$.

Note that $\mathrm{RC}_n$ (similarly $\mathrm{DC}_n$ and $\mathrm{SC}_n$) is defined by positive equations, the negation does not appear in a non-Boolean axiom. Also note that the cylindrifications are defined to be normal and additive operators. Thus, by \cite[Theorem 2.18]{tarski}, every algebra in $\mathrm{RC}_n$ can be embedded into a complete and atomic algebra in $\mathrm{RC}_n$. Therefore, it is enough to prove that every complete and atomic algebra is representable. We need to prove some auxiliary lemmas that may be interesting in their own. 
\begin{lem}\label{aib} Let $\a{A}\in\mathrm{RC}_n$, let $i\in n$ and let $x,y\in At(\a{A})$. Then, $$x\leq c_iy\iff c_ix=c_iy.$$
\end{lem}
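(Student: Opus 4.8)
The plan is to deduce the lemma from the sharper symmetric statement that, for atoms $x,y$ and any $i\in n$,
\[
x\le c_iy\quad\Longleftrightarrow\quad y\le c_ix .
\]
Granting this, the ``$\Leftarrow$'' of the lemma is immediate (from (Ax2), $c_ix=c_iy$ gives $x\le c_ix=c_iy$), and for ``$\Rightarrow$'' one argues: if $x\le c_iy$ then, by the displayed equivalence, also $y\le c_ix$; applying the monotone operator $c_i$ to both inclusions and using idempotence of $c_i$ yields $c_ix\le c_ic_iy=c_iy$ and $c_iy\le c_ic_ix=c_ix$, hence $c_ix=c_iy$.

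So everything reduces to two elementary facts about an arbitrary $\mathfrak{A}\in\mathrm{RC}_n$. \emph{Fact 1: $c_i$ is idempotent.} From (Ax2) we get $c_i1=1$, and then instantiating (Ax3) with first argument $1$ gives $c_ic_iz=c_i1\cdot c_iz=c_iz$. \emph{Fact 2: every element of the form $-c_iz$ satisfies $c_i(-c_iz)=-c_iz$.} Instantiating (Ax3) with first argument $-c_iz$ gives $c_i(-c_iz\cdot c_iz)=c_i(-c_iz)\cdot c_iz$; the left-hand side is $c_i0=0$ by (Ax1), so $c_i(-c_iz)\le -c_iz$, while the reverse inequality is (Ax2).

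It remains to prove the displayed equivalence, and by symmetry it suffices to show $x\le c_iy\Rightarrow y\le c_ix$. Assume $x\le c_iy$ and suppose for contradiction that $y\not\le c_ix$. Since $y$ is an atom, $y\cdot c_ix=0$, i.e.\ $y\le -c_ix$. By Fact 2 the element $-c_ix$ is fixed by $c_i$, so monotonicity of $c_i$ gives $c_iy\le c_i(-c_ix)=-c_ix$, that is $c_ix\cdot c_iy=0$. But $x$ is an atom, so $x\ne 0$, while $x\le c_ix$ by (Ax2) and $x\le c_iy$ by assumption; hence $0\ne x\le c_ix\cdot c_iy=0$, a contradiction. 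Therefore $y\le c_ix$.

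I do not expect a genuine obstacle: the argument is short once Facts 1 and 2 are isolated. The only point needing a little care is Fact 2 — that complements of cylindrifications are closed under $c_i$ — since this is precisely what licenses the step $c_iy\le c_i(-c_ix)=-c_ix$ in the contradiction argument, and it is the only place where the non-Boolean content of (Ax3), rather than mere additivity of $c_i$, is used.
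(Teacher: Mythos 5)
Your proof is correct and follows essentially the same route as the paper's: both hinge on establishing the symmetric fact that $x\le c_iy$ implies $y\le c_ix$ for atoms, which is exactly where (Ax3) (together with (Ax1), (Ax2) and atomicity) does the work, and both then finish by monotonicity and idempotence of $c_i$. The only difference is cosmetic: the paper derives $y\cdot c_ix\ne 0$ directly from $c_ix\cdot c_iy=c_i(y\cdot c_ix)\ne 0$, whereas you argue the contrapositive via your Fact 2 (that $-c_ix$ is $c_i$-closed) -- two interchangeable packagings of the same appeal to (Ax3).
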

\begin{proof}
We prove the non-trivial direction only. The assumption $x\leq c_iy$ and \cite[Theorem 1.2.9]{hmt1} imply that $c_ix\leq c_iy$. Moreover, 

\begin{eqnarray*}
x\leq c_iy&\implies& x\cdot c_iy\not=0 \\
&\implies& c_i(x\cdot c_iy)\not=0 \hspace{0.5cm}\text{ by \cite[Theorem 1.2.1]{hmt1}}\\
&\implies& c_ix\cdot c_iy\not=0 \hspace{0.5cm}\text{ by axiom (Ax3)}\\
&\implies& c_i(y\cdot c_ix)\not=0 \hspace{0.5cm}\text{ by axiom (Ax3)}\\
&\implies& y\cdot c_ix\not=0 \hspace{0.5cm}\text{ by \cite[Theorem 1.2.1]{hmt1}}\\
&\implies& y\leq c_ix \hspace{0.5cm}\text{ by the fact that $y\in At(\a{A})$}\\
&\implies& c_iy\leq c_ix \hspace{0.5cm}\text{ by \cite[Theorem 1.2.9]{hmt1}}.
\end{eqnarray*}
Note that all the cited theorems uses only the axioms (Ax0) - (Ax3).
\end{proof}
To represent an atomic algebra $\a{A}$, we roughly represent each atom $a\in \a{A}$ by a sequence $f$. Then, we show that $\a{A}$ can be embedded into the full algebra whose unit consists of all sequences representing atoms. The real challenge now is to arrange that the unit has the desired properties, by adding the substitutions $f\circ[i/j]$ or the transpositions $f\circ[i,j]$ whenever it is necessary. Such new sequences need to be associated to some atoms to keep the claim that each atom is represented by some sequences (maybe more than one), and there are no irrelevant sequences. For example, the following Lemma defines the substitutions of an atom, if any exists. 

\noindent {\bf{Notation:}} Let $i,j\in n$ be such that $i\not=j$. Define $t^i_ix\myeq x$ and $t^i_jx\myeq c_ix\cdot d_{ij}$
\begin{lem}\label{atom}
Suppose that $\a{A}\in\mathrm{RC}_n$ is an atomic algebra. Let $x\in At(\a{A})$ and let $i,j\in n$. Then, $t^i_jx\not=0\implies t^i_jx\in At(\a{A})$.
Moreover, if $x\leq d_{ij}$ then we have $t^i_jx=x$.
\end{lem}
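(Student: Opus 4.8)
The plan is to prove the ``moreover'' clause first, since it will be the engine driving the main claim. Assume $x\leq d_{ij}$. If $i=j$ there is nothing to do, since $t^i_ix=x$ by definition, so suppose $i\not=j$. From $x\leq d_{ij}$ we get $x\cdot d_{ij}=x$, hence $c_ix=c_i(x\cdot d_{ij})$, and then axiom (Ax6) yields
\[
t^i_jx=c_ix\cdot d_{ij}=c_i(x\cdot d_{ij})\cdot d_{ij}\leq x;
\]
the reverse inequality $x\leq c_ix\cdot d_{ij}$ is immediate from (Ax2) together with $x\leq d_{ij}$. Hence $t^i_jx=x$.

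For the first assertion, the case $i=j$ is again trivial, as $t^i_ix=x\in At(\a{A})$. So assume $i\not=j$ and that $t^i_jx=c_ix\cdot d_{ij}\not=0$. Since $\a{A}$ is atomic, I would pick an atom $y\leq c_ix\cdot d_{ij}$. Then $y\leq d_{ij}$, so the ``moreover'' clause already established (applied to $y$) gives $c_iy\cdot d_{ij}=t^i_jy=y$. Also $y\leq c_ix$ with $x,y\in At(\a{A})$, so Lemma~\ref{aib} supplies $c_iy=c_ix$. Combining these,
\[
y=c_iy\cdot d_{ij}=c_ix\cdot d_{ij}=t^i_jx,
\]
so $t^i_jx$ coincides with the atom $y$ and is therefore itself an atom.

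There is no genuinely hard step here: the argument just squeezes $t^i_jx$ (or an arbitrary sub-atom of it) between $x$ and $c_ix\cdot d_{ij}$ using (Ax2) and (Ax6), with Lemma~\ref{aib} providing the one nontrivial ingredient, namely the equality $c_iy=c_ix$ that lets a sub-atom of $t^i_jx$ be identified with all of $t^i_jx$. The only point requiring care is to keep the degenerate case $i=j$ separate throughout, since (Ax6) is stated only for $i\not=j$ while $t^i_i$ is fixed by the trivial defining clause.
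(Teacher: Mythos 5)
Your proof is correct and follows essentially the same route as the paper's: pick an atom $y\leq t^i_jx$, use Lemma~\ref{aib} to get $c_ix=c_iy$, and use (Ax2)/(Ax6) to identify $y$ with $c_iy\cdot d_{ij}=c_ix\cdot d_{ij}=t^i_jx$. The only (harmless) organizational difference is that you establish the ``moreover'' clause first and invoke it for $y$, whereas the paper inlines that computation and dismisses the ``moreover'' part as obvious afterwards.
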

\begin{proof}The statement is obvious if $i=j$, so we may assume that $i\not=j$. Suppose that $t^i_jx\not=0$. Since $\a{A}$ is atomic, one can find an atom $y\in At(\a{A})$ such that $y\leq t^i_jx=c_ix\cdot d_{ij}$. Thus, by Lemma~\ref{aib}, we have $c_ix=c_iy$. Note also that $y\leq d_{ij}$, so axiom (Ax6) implies 
$$t^i_jx=c_ix\cdot d_{ij}=c_iy\cdot d_{ij}=c_i(y\cdot d_{ij})\cdot d_{ij}=y.$$
Therefore, $t^i_jx$ is an atom in $\a{A}$ as desired. The remaining part is obvious. 
\end{proof}
\begin{defn}
Let $\a{A}\in\mathrm{DC}_n$ and let $\tau=[i_1/j_1]\circ\cdots\circ[i_m/j_m]\in\Omega_n$. For each $x\in \a{A}$, we define $\tau^{\a{A}}x\myeq t^{i_m}_{j_m}\cdots t^{i_1}_{j_1}x$. This is well defined by the following Lemma. 
\end{defn}
\begin{lem}\label{andreka}
Let $\a{A}\in\mathrm{DC}_n$ and let $\tau,\sigma\in\Omega_n$. Then 
$$(\forall i\in n) \ (\tau(i)=\sigma(i))\implies (\forall x\in\a{A}) \ (\tau^{\a{A}}x=\sigma^{\a{A}}x).$$
\end{lem}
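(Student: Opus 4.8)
The plan is to show that the operator $t^{i_m}_{j_m}\cdots t^{i_1}_{j_1}$ depends only on the composite map $\tau=[i_1/j_1]\circ\cdots\circ[i_m/j_m]$, not on the chosen factorisation of $\tau$ into replacement maps. For a word $w=([i_1/j_1],\dots,[i_m/j_m])$ put $\mathrm{t}_w:=t^{i_m}_{j_m}\cdots t^{i_1}_{j_1}$; the claim is that $\mathrm{t}_w=\mathrm{t}_{w'}$ whenever $w$ and $w'$ compose in $\Omega_n$ to the same $\tau$. Since $\Omega_n$ is finite and each $\tau$ has only finitely many representing words, this is a finite list of equations; so, using the remark at the start of this section that $\mathrm{DC}_n$ is a variety defined by positive equations with additive cylindrifications, I may first embed $\a{A}$ into a complete atomic $\a{B}\in\mathrm{DC}_n$. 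The maps $t^i_j$ are term-definable, hence preserved, and completely additive in $\a{B}$, so $\mathrm{t}_w$ is determined by its values on atoms; thus I assume $x\in At(\a{A})$, and by Lemma~\ref{atom} each $\mathrm{t}_wx$ is then $0$ or an atom, with $t^i_jx=x$ whenever $x\le d_{ij}$. (This reduction is only a convenience; one can also argue equationally throughout.)

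Step~1 is to assemble a short toolkit of rewriting rules for the $t^i_j$, each being the algebraic shadow of an identity between replacement maps on $n$: (i) \emph{deletion}: $t^i_ix=x$, and $t^i_jx=x$ for $x\le d_{ij}$, whence $t^i_jt^i_jx=t^i_jx$; (ii) \emph{absorption}: $t^i_kt^i_jx=t^i_kx$ for $k\ne i$, matching $[i/j]\circ[i/k]=[i/k]$; (iii) \emph{commutation}: $t^k_lt^i_jx=t^i_jt^k_lx$ whenever $[i/j]$ and $[k/l]$ commute as maps, i.e.\ $i\notin\{k,l\}$ and $j\ne k$; (iv) \emph{chaining}: $t^k_it^i_jx=t^i_jt^k_jx$ for pairwise distinct $i,j,k$, matching $[i/j]\circ[k/i]=[k/j]\circ[i/j]$; and (v) a four-index rule which is essentially the merry-go-round axiom (Ax10) with each $s^i_jy$ rewritten as $c_iy\cdot d_{ij}$. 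Here (i) is immediate from the definition of $t^i_j$ and Lemma~\ref{atom}; (ii) and (iv) follow from (Ax6) together with the diagonal arithmetic of (Ax5) and (Ax9); (iii) has to be wrung out of the \emph{weak} commutativity axiom (Ax8), by pushing one cylindrification past the other at the cost of a diagonal factor which is then reabsorbed; and (v) is (Ax10). In the atomic setting each identity may instead be checked by comparing the two sides -- each $0$ or an atom -- through their cylindrifications via Lemma~\ref{aib}, together with the diagonals they lie below.

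Step~2 is the combinatorial core: rules (i)--(v) suffice to pass between any two words with the same composite -- equivalently, (i)--(v), read as identities among the generators $[i/j]$, present the monoid $\{\mathrm{id}_n\}\cup\Omega_n$. I would prove this by reduction to a normal form: given a word, use the rules to eliminate, one coordinate at a time, the letters touching that coordinate -- commutation (iii) brings them together, then absorption (ii), chaining (iv) and deletion (i) collapse them to a single letter or to nothing -- which replaces the task by an $(n-1)$-dimensional one; iterating attaches a canonical word to every $\tau\in\Omega_n$, and any two words with the same composite reduce to it. The four-index rule (v) is invoked precisely when removing a coordinate meets a genuine cycle among the others; checking that the side conditions of (ii)--(v) hold at each step is laborious but routine.

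The one genuinely delicate point is rule (iii) (and, to a smaller extent, (iv) and (v)): establishing commutation of the substitution operators. In contrast with cylindric algebras proper, here one has neither $c_ic_j=c_jc_i$ nor $s^i_js^k_l=s^k_ls^i_j$ outright, only the enfeebled forms (Ax8)--(Ax10); so each such identity must be built by a careful Boolean computation -- commute two cylindrifications via (Ax8) up to a diagonal term, then move or annihilate that term using (Ax5)/(Ax9), and close any cycle of substitutions that results with the merry-go-round (Ax10) -- all while respecting exactly the distinctness hypotheses those axioms demand. Once this toolkit is in place, the rest (the reduction to atoms, the normal-form induction, and the final assembly) is bookkeeping.
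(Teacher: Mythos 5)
Your overall architecture is the right one, and in fact it is the one the paper itself gestures at: the paper does not prove this lemma at all, but cites Lemma~1 of Andr\'eka--Thompson and remarks that ``using fairly obvious results on semigroups a much shorter proof can be provided.'' Your plan --- reduce to well-definedness over factorisations, extract a finite list of identities among the $t^i_j$ mirroring relations among the replacement maps, and show those relations present the monoid $\{\mathrm{id}_n\}\cup\Omega_n$ --- is precisely that alluded-to proof, and your reduction to a complete atomic algebra via \cite[Theorem 2.18]{tarski} and complete additivity of the $t^i_j$ is sound. The problem is that both pillars of the argument are asserted rather than proved, and they are exactly where the content of the cited Lemma~1 lives.

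First, the derivability of the rules. Rules (i) and (v) are fine ((v) is (Ax10) verbatim), and (ii) does come out quickly once one notes that (Ax9) with $k:=i$ and (Ax4) give $c_id_{ij}=1$, so $t^i_kt^i_jx=c_i(c_ix\cdot d_{ij})\cdot d_{ik}=c_ix\cdot c_id_{ij}\cdot d_{ik}=t^i_kx$ by (Ax3). But (iii) and (iv) are the crux, and ``wrung out of (Ax8)'' is a promissory note: (Ax8) gives only the one-sided inequality $c_ic_jx\geq c_jc_ix\cdot d_{jk}$, and converting that into the two-sided identity $t^k_lt^i_jx=t^i_jt^k_lx$ under your exact side conditions (and likewise the chaining rule $t^k_it^i_jx=t^i_jt^k_jx$) is a genuinely long Boolean/atomic computation --- tracking which diagonals each intermediate atom lies below via (Ax5), recovering atoms from their cylindrifications via Lemma~\ref{aib} and (Ax6), and deciding at each point whether (Ax8) or (Ax10) is the axiom that closes the loop. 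That computation is the lemma; omitting it leaves the proof empty at its center. Second, the claim that (i)--(v) present the monoid is itself a nontrivial combinatorial theorem, not a ``fairly obvious'' fact to be waved at: your normal-form procedure neither establishes termination nor handles the configurations where the side conditions of (iii) fail --- which is exactly when two letters touching the coordinate you want to eliminate are entangled and (iv)/(v) must be deployed in a specific order. Until both of these are carried out (or replaced by precise citations, e.g.\ to a known presentation of the non-invertible transformation monoid together with a verification that each defining relation is one of your derived identities), the proposal is a correct plan but not a proof.
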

\begin{proof}See \cite{andthm} proof of Lemma 1 therein. Axiom (Ax10) is used here. We note that And\'eka’s proof of this lemma is long, but using fairly obvious results on semigroups a much shorter proof can be provided.
\end{proof}
Similarly, we need to define the transpositions of an atom, if $\a{A}\in\mathrm{SC}_n$. This can be done using the axioms (Ax{11}) and (Ax{12}). By Lemma~\ref{trans} below, there is at least one choice (maybe many) to define these transpositions
\begin{defn}
Assume that $\a{A}\in\mathrm{SC}_n$. Let $i,j\in n$ be such that $i\not=j$ and let $x\in\a{A}$. We define $p_{ii}x\myeq x$ and $$p_{ij}x\myeq s^i_jc_jx\cdot s^j_ic_ix\cdot\prod_{k\in n,k\not=i,j}s^k_is^i_js^j_kc_kx.$$
If the product is empty (i.e. if $n=2$) then it is defined to be $1$.
\end{defn}
\begin{lem}\label{trans}
Let $\a{A}\in\mathrm{SC}_n$, let $i,j\in n$ and let $x\in At(\a{A})$. Then, $p_{ij}x\not=0$.
\end{lem}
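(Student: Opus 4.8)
The plan is to reduce the claim to the two axioms (Ax11) and (Ax12), which were put in precisely to guarantee that transpositions of atoms do not vanish, and to use the atomicity of $\a{A}$ only through the dichotomy ``$x\le d$ or $x\le -d$''. The case $i=j$ is immediate, since $p_{ii}x=x\ne 0$ as $x$ is an atom; so I assume $i\ne j$ from now on, and note that $x\ne 0$ throughout.

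Suppose first $n\ge 3$. Then (Ax12), taken at this particular pair $i,j$, says exactly that $x\le c_ic_j(p_{ij}x)$. If $p_{ij}x$ were $0$, two uses of (Ax1) would give $c_ic_j(p_{ij}x)=c_ic_j0=c_i0=0$, forcing $x=0$ --- a contradiction; hence $p_{ij}x\ne 0$. Now suppose $n=2$, so $\{i,j\}=\{0,1\}$; since the defining term of $p_{ij}x$ (with the empty product read as $1$) is symmetric in $i,j$ and $d_{01}=d_{10}$ by (Ax5), I may take $(i,j)=(0,1)$. As $x$ is an atom, either $x\le d_{01}$ or $x\le -d_{01}$. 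If $x\le d_{01}$, I argue directly that $x$ lies below each conjunct of $p_{01}x$: from $x\le c_1x$ (by (Ax2)) and $x\le d_{01}$ we get $x\le c_1x\cdot d_{01}$, so by (Ax2) and monotonicity of $c_0$, $x\le c_0x\le c_0(c_1x\cdot d_{01})=s^0_1c_1x$; symmetrically $x\le s^1_0c_0x$, hence $0\ne x\le p_{01}x$. If instead $x\le -d_{01}$, then $x\cdot-d_{01}=x$ and (Ax11) yields $x\le c_0c_1(-d_{01}\cdot s^0_1c_1x\cdot s^1_0c_0x)=c_0c_1(-d_{01}\cdot p_{01}x)$; were $p_{01}x=0$ this right-hand side would collapse to $c_0c_10=0$ by (Ax1), again contradicting $x\ne 0$.

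I do not expect a genuine obstacle here: once the atomicity dichotomy is in hand, each axiom does its work essentially mechanically. The only step needing a little care is the subcase $n=2$ with $x\le d_{01}$, where (Ax11) is vacuous and one must check by hand that $x$ sits below every factor of $p_{01}x$ --- the short computation indicated above, which also explains why the product in the definition of $p_{ij}x$ is declared to be $1$ when $n=2$.
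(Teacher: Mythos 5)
Your proof is correct and follows essentially the same route as the paper: handle $i=j$ trivially, invoke (Ax1) with (Ax12) for $n\ge 3$, and for $n=2$ split on whether $x\le d_{01}$ or $x\le -d_{01}$, using (Ax11) in the latter case. The only (harmless) difference is in the subcase $n=2$, $x\le d_{01}$: the paper computes $p_{01}x=c_0x\cdot c_1x$ via (Ax6), whereas you show $x\le p_{01}x$ directly from (Ax2) and monotonicity, which is slightly more economical and equally valid.
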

\begin{proof}If $i=j$ then the statement is trivial. So we assume that $i\not=j$. If $n\geq 3$ then we are done by axioms (Ax{1}) and (Ax{12}). Suppose that $n=2$. If $x\leq -d_{01}$ then the desired follows by axiom (Ax{11}). If $x\leq d_{01}$ then 
\begin{eqnarray*}
p_{01}x&=& s^0_1c_1x\cdot s^1_0c_0x\\
&=& s^0_1c_1(x\cdot d_{01})\cdot s^1_0c_0(x\cdot d_{01})\\
&=& c_0(c_1(x\cdot d_{01})\cdot d_{01})\cdot c_1(c_0(x\cdot d_{01})\cdot d_{01})\\
&=& c_0(x\cdot d_{01})\cdot c_1(x\cdot d_{01}) \ \ \text{by axioms (Ax{2}) and (Ax{6})}\\
&=& c_0x\cdot c_1x.
\end{eqnarray*}
Hence, $x\leq c_0x\cdot c_1x=p_{01}x$ which implies that $p_{01}x\not=0$ as desired.
\end{proof}
We will use the following lemma in the proceeding sections. 
\begin{lem}\label{mylemma}
Suppose that $\a{A}\in\mathrm{SC}_n$ is complete and atomic algebra. Let $x,y,z\in At(\a{A})$ be some atoms and let $i,j,k\in n$ be such that $k\not=i,j$.
\begin{enumerate}
\item $y\leq s^k_is^i_js^j_kc_kx \text{ and } z=t^i_kt^j_it^k_jx\implies c_ky=c_kz$.
\item $y\leq s^i_jc_j x \text{ and } z=t^j_ix\implies c_iz=c_iy$.
\end{enumerate}
\end{lem}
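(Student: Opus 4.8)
The plan is to reduce both items to Lemmas~\ref{aib} and~\ref{atom} by means of a single ``telescoping'' identity for the substitution operators. First I would record that, for all $a,b\in n$ and all $u\in\a{A}$,
$$s^a_b(c_bu)=c_a(t^b_au),$$
which is trivial when $a=b$ and, when $a\neq b$, is immediate from the definitions $s^a_bw=c_a(w\cdot d_{ab})$ and $t^b_au=c_bu\cdot d_{ba}$ together with $d_{ab}=d_{ba}$ (axiom (Ax5)). Applying this identity repeatedly, from the inside out, the composite substitution in item~(1) collapses:
$$s^k_is^i_js^j_kc_kx=s^k_is^i_j\bigl(c_j(t^k_jx)\bigr)=s^k_i\bigl(c_i(t^j_it^k_jx)\bigr)=c_k\bigl(t^i_kt^j_it^k_jx\bigr)=c_kz,$$
and, likewise, the single substitution in item~(2) gives $s^i_jc_jx=c_i(t^j_ix)=c_iz$. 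Hence in both cases the hypothesis on $y$ becomes, respectively, $y\leq c_kz$ and $y\leq c_iz$.

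Next I would check that $z$ is an atom. As $y$ is an atom it is nonzero, so $c_kz\neq 0$ (resp.\ $c_iz\neq 0$), and therefore $z\neq 0$ by (Ax1). Now $z$ is obtained from the atom $x$ by applying the operators $t^{\,\cdot}_{\,\cdot}$ (three times in~(1), once in~(2)); since each $t^a_b$ maps $0$ to $0$, none of the intermediate results may vanish, so Lemma~\ref{atom} applies at each step and shows in particular that $z$ is an atom.

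Finally, having $y,z\in At(\a{A})$ together with $y\leq c_kz$ (resp.\ $y\leq c_iz$), Lemma~\ref{aib} yields $c_ky=c_kz$ (resp.\ $c_iy=c_iz$), which is exactly the claim. The degenerate instances (when $i=j$, or when $n=2$ so that no index $k\neq i,j$ exists and item~(1) is vacuous) are covered by the same computation once $s^i_i$ and $t^i_i$ are read as the identity map.

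I expect the substantive steps to be verifying that $z$ is an atom and, above all, the bookkeeping in the telescoping step --- tracking which of $i,j,k$ occupies the ``super'' and which the ``sub'' position in each factor, and checking that the nested cylindrifications and diagonals really do fuse into $t^i_kt^j_it^k_jx$ as written. Beyond that the argument is routine: there is no real conceptual obstacle once the identity $s^a_b(c_bu)=c_a(t^b_au)$ is in hand.
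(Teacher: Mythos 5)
Your proof is correct, but it takes a genuinely different route from the paper's. The paper works "top down": it invokes complete additivity of the $c_i$ and $s^i_j$ (hence the hypothesis that $\a{A}$ is complete and atomic) to peel off witness atoms $a_0\leq c_kx\cdot d_{jk}$, $a_1\leq c_ja_0\cdot d_{ij}$, $a_2\leq c_ia_1\cdot d_{ki}$ with $y\leq c_ka_2$, and then uses Lemma~\ref{atom} at each stage to identify $a_0=t^k_jx$, $a_1=t^j_it^k_jx$, $a_2=t^i_kt^j_it^k_jx=z$, whence $y\leq c_kz$. You instead observe the purely equational identity $s^a_b(c_bu)=c_a(t^b_au)$ --- which indeed holds by unfolding the definitions, given the symmetry $d_{ab}=d_{ba}$ that the paper's argument also uses tacitly --- and telescope it to get the exact equality $s^k_is^i_js^j_kc_kx=c_kz$ (and $s^i_jc_jx=c_iz$ in item (2)), after which $y\leq c_kz$ is immediate. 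Both arguments then close with Lemma~\ref{aib}. Your version buys something real: it needs no complete additivity and no witness extraction, so it works in any atomic (indeed, for the identity itself, any) algebra, and it yields the stronger conclusion that the substituted term literally equals $c_kz$ rather than merely dominating an atom below it. Your verification that $z$ is an atom is harmless but redundant, since $z\in At(\a{A})$ is part of the hypothesis; and note that for $n=2$ item (1) is not quite vacuous (the case $i=j$ with $k$ the remaining index survives), though your computation covers it once $s^i_i$ and $t^i_i$ are read as the identity, as you say.
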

\begin{proof} Suppose that $\a{A}\in\mathrm{SC}_n$ is complete and atomic algebra. Hence, by \cite[Theorem 1.2.6 and Theorem 1.5.3]{hmt1}, the operations $c_i$'s and $s^i_j$'s are completely additive. 
\begin{enumerate}
\item Suppose that $y\leq s^k_is^i_js^j_kc_kx$. Thus, by the assumptions on $\a{A}$, there are some atoms $a_0,a_1,a_2\in At(\a{A})$ such that 
$$a_0\leq c_kx, \ a_1\leq c_ja_0, \ a_2\leq c_ia_1, \ y\leq c_ka_2,$$
$$a_0\leq d_{jk}, \ a_1\leq d_{ij} \ a_2\leq d_{ki}.$$
Inductively, Lemma~\ref{atom} implies that 
$$a_0=t^k_jx, \ a_1=t^j_ia_0=t^j_it^k_jx, \ a_2=t^i_ka_1=t^i_kt^j_it^k_jx.$$ 
Therefore, $z=a_2$ and hence $y\leq c_kz$. The desired follows by Lemma~\ref{aib}.
\item Again, by assumptions, there is an atom $a$ such that $y\leq c_i a$, $a\leq d_{ij}$ and $a\leq c_jx$. Hence, by Lemma~\ref{atom}, we have $a=t^j_ix=z$. Therefore, $y\leq c_ia\leq c_i z$ and we are done by Lemma~\ref{aib}.\qedhere
\end{enumerate}
\end{proof}
\section{Networks and mosaics}
Throughout, let $\mathrm{K}\in\{ \mathrm{RC},\mathrm{DC},\mathrm{SC}\}$ and let $\a{A}\in\mathrm{K}_{n}$ be arbitrary but fixed. Suppose that $\a{A}$ is complete and atomic. The networks and mosaics we define here are approximations of the desired representation of $\a{A}$.
\begin{defn}
A pre-network is a pair $N=(N_1,N_2)$, where $N_1$ is a finite (possibly empty) set, and $N_2:{^nN_1}\rightarrow At(\a{A})$ is a partial map. We write $\n(N)$ for $N_1$ and $\e(N)$ for the domain of $N_2$. Also, we may write $N$ for any of $N$, $N_1$, $N_2$.
\end{defn}
\begin{enumerate}[1]
\item[-] We  write $\emptyset$ for the pre-network $(\emptyset,\emptyset)$.
\item[-] For the pre-networks $N$ and $N'$, we write $N\subseteq N'$ iff $\n(N)\subseteq \n(N')$, $\e(N)\subseteq \e(N')$, and $N'(f)=N(f)$ for all $f\in \e(N)$.
\item[-] Let $\alpha$ be an ordinal. A sequence of pre-networks $\langle N_{\beta}:\beta\in\alpha\rangle$ is said to be a chain if $N_{\gamma}\subseteq N_{\beta}$ whenever $\gamma\in\beta$. Supposing that $\langle N_{\beta}:\beta\in\alpha\rangle$ is a chain of pre-networks, define the pre-network $N=\bigcup\{N_{\beta}:\beta\in\alpha\}$ with $$\n(N)=\bigcup\{\n(N_{\beta}):\beta\in\alpha\}, \ \e(N)=\bigcup\{\e(N_{\beta}):\beta\in\alpha\}$$ and, for each $f\in \e(N)$, we let $N(f)=N_{\beta}(f)$, where $\beta\in\alpha$ is any ordinal with $f\in \e(N_{\beta})$.
\end{enumerate}
\begin{defn}
Let $N$ be a pre-network and let $f,g\in\e(N)$. A sequence of edges $h_0,\ldots,h_m$ is said to be a zigzag of length $m$ from $f$ to $g$ in $N$ if the following hold:
\begin{itemize}
\item $h_0=f$ and $h_m=g$.
\item For each $0\leq t\leq m$, $\r(f)\cap\r(g)\subseteq \r(h_t)$.
\item For each $0\leq t<m$, there is $i_{t+1}\in n$ such that $$h_t\not=h_{t+1}, \ \ h_t\equiv_{i_{t+1}}h_{t+1} \ \text{ and } \ c_{i_{t+1}}N(h_t)=c_{i_{t+1}}N(h_{t+1}).$$
\end{itemize}
\end{defn}
\begin{defn}\label{netwrok}
A pre-network $N$ is said to be a network if it satisfies the following conditions for each $f,g\in \e(N)$ and each $i,j\in n$:
\begin{enumerate}
\item[(a)] \begin{enumerate}[(i)]
\item $\mathrm{K}=\mathrm{DC}\implies\e(N)\text{ is diagonalizable}$.
\item $\mathrm{K}=\mathrm{SC}\implies\e(N)\text{ is diagonalizable and permutable}$.
\end{enumerate}
\item[(b)] $N(f)\leq d_{ij}\iff f(i)=f(j)$.
\item[(c)] If $0<\vert\r(f)\cap \r(g)\vert<n$ then there is a zigzag from $f$ to $g$ in $N$. 
\end{enumerate}
\end{defn}
\begin{lem}\label{tauclosed}
Let $N$ be a network. Let $f,g\in\e(N)$, $i,j\in n$ and $\tau\in\Omega_n$. The following are true:
\begin{enumerate}
\item If $f\equiv_i g$ then $c_iN(f)=c_iN(g)$.\footnote{The proof of this item is distilled from Andr\'eka's proof in \cite[Theorem 9.4]{monk}.} 
\item If $f\circ[i/j]\in\e(N)$ then $N(f\circ[i/j])=t^i_jN(f)$. 
\item Suppose that $\mathrm{K}\in\{\mathrm{DC},\mathrm{SC}\}$. Then, $N(f\circ\tau)=\tau^{\a{A}}N(f)$.
\end{enumerate} 
\end{lem}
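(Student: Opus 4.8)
The plan is to prove the three items essentially in order, using the network axioms (a)--(c), the lemmas on atoms already established (Lemma~\ref{aib}, Lemma~\ref{atom}, Lemma~\ref{andreka}), and the defining inequalities of $\mathrm{RC}_n$. For item (1), I would argue by induction on the length of a zigzag. If $\r(f)=\r(g)$ and $|\r(f)|<n$, condition (c) gives a zigzag $h_0=f,\dots,h_m=g$; at each step $h_t\equiv_{i_{t+1}}h_{t+1}$ with $c_{i_{t+1}}N(h_t)=c_{i_{t+1}}N(h_{t+1})$, and since we also have $h_t\equiv_i h_{t+1}$ only when the changed coordinate is $i$, one wants to propagate the equation $c_iN(h_0)=c_iN(h_m)$ along the chain. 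The subtlety is that consecutive zigzag steps change possibly different coordinates, so one needs axiom (Ax3) (or rather Lemma~\ref{aib} together with the commutation/idempotency of cylindrifiers on atoms) to move $c_i$ past a $c_{i_{t+1}}$-equality. The degenerate cases $|\r(f)\cap\r(g)|=0$ or $=n$ must be handled separately: if $f\equiv_i g$ and $\r(f)=\r(g)$ has size $n$ then $f=g$ (changing one coordinate cannot preserve a range of full size while producing a different sequence only if the old value reappears, which forces equality on atoms via (b)), and if the common range is empty then $n=0$, vacuous here since $n\geq 2$. Actually the cleanest route, and the one the footnote points to, is: from $f\equiv_i g$ we get $\r(f)$ and $\r(g)$ differ in at most the $i$-th coordinate, so $\r(f)\cap\r(g)$ has size $\geq n-1$; if it has size $n$ then $f=g$; if exactly $n-1$ then apply (c) to get a zigzag and push $c_i$ through.

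For item (2), suppose $f\circ[i/j]\in\e(N)$. The sequence $g:=f\circ[i/j]$ satisfies $g(i)=f(j)=g(j)$, so by network condition (b), $N(g)\leq d_{ij}$. Also $g\equiv_i f$ (they agree off coordinate $i$), so item (1) gives $c_iN(f)=c_iN(g)$. Now $N(g)$ is an atom below $d_{ij}$ with $c_iN(g)=c_iN(f)$, hence $N(g)=c_iN(g)\cdot d_{ij}=c_iN(f)\cdot d_{ij}$; but by the definition of $t^i_j$ and Lemma~\ref{atom} (the ``moreover'' clause together with the displayed computation $t^i_jx=c_ix\cdot d_{ij}=y$ for the atom $y\leq t^i_jx$), this last expression equals $t^i_jN(f)$ provided $t^i_jN(f)\neq 0$ — and it is nonzero since $N(g)$ witnesses it. If $i=j$ then $f\circ[i/j]=f$ and $t^i_jN(f)=N(f)$ trivially. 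So item (2) reduces to item (1) plus the atom arithmetic of Lemma~\ref{atom}.

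Item (3) is then a routine induction on the number of factors in a decomposition $\tau=[i_1/j_1]\circ\cdots\circ[i_m/j_m]$ of $\tau\in\Omega_n$. When $\mathrm{K}\in\{\mathrm{DC},\mathrm{SC}\}$, network condition (a) guarantees $\e(N)$ is diagonalizable, hence closed under precomposition with every $\sigma\in\Omega_n$; in particular each partial composite $f\circ([i_1/j_1]\circ\cdots\circ[i_t/j_t])$ lies in $\e(N)$, so item (2) applies at each step and yields $N(f\circ\tau)=t^{i_m}_{j_m}\cdots t^{i_1}_{j_1}N(f)$, which is exactly $\tau^{\a{A}}N(f)$ by definition. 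The only thing to check is that this is independent of the chosen decomposition of $\tau$, but that is precisely the content of Lemma~\ref{andreka}, so no extra work is needed. I expect item (1) to be the main obstacle — specifically, correctly threading a single fixed $c_i$ through a zigzag whose individual steps toggle different coordinates, and being careful with the boundary cases on $|\r(f)\cap\r(g)|$ — while items (2) and (3) are bookkeeping on top of it.
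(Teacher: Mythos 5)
There is a genuine gap in your treatment of item (1), which is the technical heart of the lemma. Your plan is to take the zigzag $h_0=f,\dots,h_m=g$ supplied by condition (c) and ``propagate'' the single fixed cylindrifier $c_i$ along the chain of equalities $c_{i_{t+1}}N(h_t)=c_{i_{t+1}}N(h_{t+1})$, using (Ax3) or Lemma~\ref{aib} to ``move $c_i$ past a $c_{i_{t+1}}$-equality.'' This step does not go through: (Ax3) concerns a single index ($c_i(x\cdot c_iy)=c_ix\cdot c_iy$) and gives no way to interchange $c_i$ with $c_{i_{t+1}}$ when $i\not=i_{t+1}$. Commutativity of distinct cylindrifiers is \emph{not} available in $\mathrm{RC}_n$ (even the weaker (Ax8) belongs only to $\mathrm{DC}_n$ and is guarded by a diagonal), and in the relativized set algebras $\mathrm{RCs}_n$ it genuinely fails. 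From a chain of $c_{i_t}$-equalities with varying indices one simply cannot extract $c_iN(f)=c_iN(g)$ by Boolean-plus-(Ax0)--(Ax6) reasoning; indeed, as the paper's Section 5 recalls, $\mathrm{RCs}_n$ is not finitely axiomatizable for $n\geq 3$, so no such finite-equation argument can exist. The missing ingredient is axiom (Ax7), which is precisely the (infinite) schema designed to collapse a composite $s^{i_m}_{j_m}c_{k_m}\cdots s^{i_1}_{j_1}c_{k_1}$ taken along a zigzag into a single $c_i$. The paper's proof chooses, by induction along the zigzag and a two-case analysis on whether $h_t(i_{t+1})\in \r(h_{t+1})$, indices $j_t,k_t$ satisfying the side conditions of (Ax7), establishes $N(g)\leq s^{i_m}_{j_m}c_{k_m}\cdots s^{i_1}_{j_1}c_{k_1}N(f)\cdot\prod_{l\in J}d_{l\tau(l)}$ using condition (b) for the diagonal factors, and only then invokes (Ax7) to conclude $N(g)\leq c_iN(f)$ (with Lemma~\ref{aib} finishing). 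Your sketch never mentions (Ax7) and supplies no substitute for it, so item (1) is not proved.

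Your boundary-case analysis for (1) (that $f\equiv_i g$ and $f\not=g$ force $0<\vert\r(f)\cap\r(g)\vert<n$, so condition (c) applies) is correct, and your items (2) and (3) match the paper: (2) follows from (1) together with condition (b) and the atom arithmetic of Lemma~\ref{atom}, and (3) is the evident induction using diagonalizability of $\e(N)$, with Lemma~\ref{andreka} guaranteeing independence of the decomposition. But since (2) and (3) both rest on (1), the gap above is load-bearing for the whole lemma.
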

\begin{proof}Let $N,f,g,i,j,\tau$ be as required.
\begin{enumerate}
\item Suppose that $f\equiv_ig$. If $f=g$ then we are done. Assume that $f\not=g$. So, it follows that $0<\vert \r(f)\cap\r(g)\vert <n$. Thus, we can assume that $h_0,\ldots,h_m\in\e(N)$ is a zigzag from $f$ to $g$.

We will define $i_1,\ldots,i_m,j_1,\ldots,j_m,k_1,\ldots,k_m$ such that $$N(g)\leq s^{i_m}_{j_m}c_{k_m}\cdots s^{i_1}_{j_1}c_{k_1}N(f)\cdot\prod_{l\in J}d_{l\tau(l)},$$
where $i_1,\ldots,i,\tau,J$ satisfy the conditions of axiom (Ax7). Hence, by axiom (Ax7) again, $N(g)\leq c_iN(f)$ and we will be done. We define the $i$'s as follows. For each $1\leq t\leq m$, let $i_t\in n$ be such that $h_{t-1}\equiv_{i_t}h_{t}$. Let $J=\{i_1,\ldots,i_m\}\setminus\{i\}$ and $J^{+}=J\cup\{i\}$. Note that $\vert J^{+}\vert>\vert J\vert$. We will define $j_t$ and $k_t$ for $1\leq t\leq m$ by induction on $t$ such that by letting $$\tau_t=[i_t/j_t]\circ\cdots\circ[i_1/j_1]$$ 
we will have for all $t<m$ that 
\begin{eqnarray*}
h_0(l)&=&h_{t+1}(\tau_{t+1}(l))\text{ for all }l\in J,\\
N(h_{t+1})&\leq&s^{i_{t+1}}_{j_{t+1}}c_{k_{t+1}}N(h_t)\text{ and }\\
k_{t+1}&\in&J^{+}\setminus\tau_t^{*}J.
\end{eqnarray*}
Let $t<m$ and assume that $j_{t'},k_{t'}$ have been define for all $1\leq t'\leq t$ with the above properties. Now, we have two cases.

\noindent{\bf{Case 1:}}
Suppose that $h_t(i_{t+1})\in Rng(h_{t+1})$, say $h_t(i_{t+1})=h_{t+1}(j)$, for some $j\in n$. Note that $h_{t+1}\not=h_t$ and $h_t(j)=h_{t+1}(j)=h_t(i_{t+1})$, then $j\not= i_{t+1}$. Hence, $N(h_{t+1})\leq s^{i_{t+1}}_jN(h_t)$. We let $j_{t+1}\myeq j$ and $k_{t+1}\in (J^{+}\setminus \tau_t^{*}J)$ be arbitrary, note that $(J^{+}\setminus \tau_t^{*}J)\not=\emptyset$.

\noindent{\bf{Case 2:}}
Suppose that $h_t(i_{t+1})\not\in Rng(h_{t+1})$. Let $j_{t+1}\myeq k_{t+1}\myeq i_{t+1}$. Let $l\in J$ be arbitrary. Then, by $l\not=i$, we have $$h_t(\tau_t(l))=h_0(l)\in \r(f)\cap\r(g)\subseteq \r(h_{t+1}).$$ Hence, $i_{t+1}\not=\tau_t(l)$ as desired.

It is not difficult to check that the above choices satisfy our requirements. Then $N(g)\leq s^{i_m}_{j_m}c_{k_m}\cdots s^{i_1}_{j_1}c_{k_1}N(f)$. Also, for any $l\in J$, $$g(l)=f(l)=h_0(l)=h_m(\tau_m(l))=g(\tau(l)).$$
Hence $N(g)\leq d_{l\tau(l)}$ for all $l\in J$ by condition (b) of networks. Therefore, $N(g)\leq s^{i_m}_{j_m}c_{k_m}\cdots s^{i_1}_{j_1}c_{k_1}N(f)\cdot \prod_{l\in J} d_{l\tau(l)}\leq c_iN(f)$.
\item If $i=j$ then we are done. So, we may suppose that $i\not=j$ and we also assume that $f\circ[i/j]\in\e(N)$. Thus, by the above item, $c_iN(f)=c_iN(f\circ[i/j])$. Also $N$ is a network, then $N(f\circ[i/j])\leq d_{ij}$. Therefore, $N(f\circ[i/j])\leq c_iN(f)\cdot d_{ij}=t^i_jN(f)$.
\item Let $\tau\in\Omega_n$ and assume that $\tau=[i_1/j_1]\circ\cdots\circ[i_m/j_m]$. The statement follows by an induction argument that uses item (2). \qedhere
\end{enumerate}
\end{proof}
An arbitrary sequence $f$ (of length $n$) is said to be a repetition free sequence if and only if $\vert\r(f)\vert=n$.
\begin{defn}
Let $a\in At(\a{A})$ and let $f$ be any sequence of length $n$ such that $(\forall i,j\in n) \ (f(i)=f(j)\iff a\leq d_{ij})$. The mosaic generated by $f$ and $a$, in symbols $M(f,a)$, is defined to be the pre-network $N$, where $\n(N)=\r(f)$ and:
\begin{enumerate}[(a)]
\item If $\mathrm{K}=\mathrm{RC}$ then $\e(N)=\{f\}$ and $N(f)=a$. 
\item\label{it} If $\mathrm{K}=\mathrm{DC}$ then $\e(N)=\{f\circ\tau:\tau\in\Omega_n\}$, $N(f)=a$ and, for each $\tau\in\Omega_n$, $N(f\circ\tau)=\tau^{\a{A}}a$.
\item If $\mathrm{K}=\mathrm{SC}$ and $\vert\r(f)\vert<n$ then the mosaic is defined in the same way as in item \eqref{it} above.
\item Suppose that $\mathrm{K}=\mathrm{SC}$ and assume that $f$ is a repetition free sequence. Then $\e(N)=\{f\circ\tau:\tau\in\Delta_n\}$, and the labeling is defined as follows. First, $N(f)=a$ and for each $\tau\in\Omega_n$, $N(f\circ\tau)=\tau^{\a{A}}a$. So, each non-repetition free sequence is labeled. It remains to label the repetition free sequences. Let $g_0,g_1, \ldots, g_N$ ($N\geq 1$ because $n\geq 2$) be some enumeration of the repetition free sequences such that 
\begin{eqnarray*}
g_0&=&f\\
g_i&=&g_j\circ[k,l]\text{ for some }j<i \text{ and }k\not=l\\
g_i&\not=&g_j \text{ if }i\not=j
\end{eqnarray*}
Such enumeration is possible. Assume that $N(g_j)$ is defined for all $j<i$. Let $j<i$ and $k\not=l$ be such that $g_i=g_j\circ [k,l]$ (If there are several such $j,k,l$ then we just select one such triple). Now we choose any atom $b\leq p_{kl}N(g_j)$ (such an atom exist by Lemma~\ref{trans}) and we define $N(g_i)=b$.
\end{enumerate}
We check that the mosaic $M(f,a)$ is well defined. The essential part follows from Lemma~\ref{andreka}. Suppose that $\tau,\sigma\in\Omega_n$ are such that $f\circ\tau=f\circ\sigma$. Fix an enumeration $\tau_1,\ldots,\tau_m$ of the set $\{[i/j]:i,j\in n\text{ and }f(i)=f(j)\}$. Now, let $\gamma=\tau_0\circ\cdots\circ\tau_m$. Hence, $\gamma\circ\tau=\gamma\circ\sigma$ and by Lemma~\ref{atom} we must have $\tau^{\a{A}}(\gamma^{\a{A}}a)=\sigma^{\a{A}}(\gamma^{\a{A}}a)$. But by the condition on the sequence $f$ and by Lemma~\ref{atom}, $\gamma^{\a{A}}a=a$. Therefore, $\tau^{\a{A}}a=\sigma^{\a{A}}a$. 
\end{defn}
\begin{lem}\label{mosok}
Let $a\in At(\a{A})$ and let $f$ be any sequence of length $n$ such that $$(\forall i,j\in n) \ (f(i)=f(j)\iff a\leq d_{ij}).$$
Then the mosaic $M\myeq M(f,a)$ is actually a network.
\end{lem}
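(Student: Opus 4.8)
The plan is to verify, for $N := M(f,a)$, the three defining conditions (a), (b), (c) of a network (Definition~\ref{netwrok}); only (c) will require real work. First note that $\n(N) = \r(f)$ is finite and that the values of $N$ are atoms by Lemma~\ref{atom}, so $N$ is a pre-network, and the case $\mathrm{K} = \mathrm{RC}$ (one edge) is trivial, so I would assume $\mathrm{K} \in \{\mathrm{DC}, \mathrm{SC}\}$. Condition (a) is read off the construction: $\e(N)$ is $\{f \circ \tau : \tau \in \Omega_n\}$, or $\{f \circ \tau : \tau \in \Lambda_n\}$ when $\mathrm{K} = \mathrm{SC}$ and $f$ is repetition free, and both these sets are closed under right-composition with every $[i/j]$ and every transposition $[i,j]$ (because $\Omega_n$ and $\Lambda_n$ are), so $\e(N)$ is diagonalizable, resp.\ diagonalizable and permutable; the compatibility of the labels was already checked when the mosaic was introduced. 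For condition (b): since $f(i) = f(j) \iff a \leq d_{ij}$, an edge $g = f \circ \tau$ has $g(i) = g(j) \iff a \leq d_{\tau(i)\tau(j)}$, so one is reduced to showing $N(g) \leq d_{ij} \iff a \leq d_{\tau(i)\tau(j)}$. When $\tau \in \Omega_n$, writing $\tau = [i_1/j_1] \circ \cdots \circ [i_m/j_m]$ so that $N(g) = t^{i_m}_{j_m} \cdots t^{i_1}_{j_1} a$, this follows by induction on $m$ from the base equivalence $t^k_l b \leq d_{pq} \iff b \leq d_{[k/l](p)\,[k/l](q)}$ for an atom $b$, which is a short case analysis from $t^k_l b = c_k b \cdot d_{kl}$, Lemma~\ref{atom}, and the diagonal axioms (Ax4), (Ax5); and when $\mathrm{K} = \mathrm{SC}$ and $g$ is repetition free, so that $N(g)$ was assigned along the enumeration $g_0, \dots, g_N$ via the operators $p_{kl}$, one checks inductively, using Lemma~\ref{mylemma}, that $N(g)$ never lies below a $d_{pq}$ with $p \neq q$ --- which is what (b) asks, since repetition free sequences satisfy no such diagonal.

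The substance of the proof is condition (c). The single preparatory fact is the mosaic counterpart of Lemma~\ref{tauclosed}(2), which cannot be cited directly since that lemma already presupposes a network: for every $e \in \e(N)$ and $i,j \in n$ with $e \circ [i/j] \in \e(N)$, one has $N(e \circ [i/j]) = t^i_j N(e)$, and hence $c_i N(e \circ [i/j]) = c_i N(e)$ by Lemma~\ref{aib}. For $e = f \circ \tau$ with $\tau$ not a permutation this is the bookkeeping identity $(\tau \circ [i/j])^{\a{A}} a = t^i_j(\tau^{\a{A}} a)$, immediate from the definition of $(-)^{\a{A}}$ together with Lemma~\ref{andreka}; for a repetition free $e$ in the $\mathrm{SC}$ case it is precisely what the well-definedness of the $p_{kl}$-labelling and Lemma~\ref{mylemma} give. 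Granting this, whenever $e, e' \in \e(N)$ with $e' = e \circ [k/l]$ and $e(k) \neq e(l)$, the pair $e, e'$ may serve as consecutive edges of a zigzag (the three requirements $e \neq e'$, $e \equiv_k e'$, and $c_k N(e) = c_k N(e')$ all hold). Now I would fix $g, h \in \e(N)$ with $R := \r(g) \cap \r(h)$ and $0 < |R| < n$, fix some $r \in R$, and proceed in three stages. Stage 1: starting from $g$, replace, one coordinate at a time, each coordinate $k$ with $g(k) \notin R$ by the value $r$; each replacement is a move $e \mapsto e \circ [k/l]$ with $e(l) = r$ (such an $l \neq k$ exists since $r \in R \subseteq \r(e)$ is maintained), hence a legal zigzag step, and since only coordinates carrying values outside $R$ are ever altered, every edge produced has range $\supseteq R$; after finitely many steps $g$ is joined to an edge $g^{\ast}$ with $\r(g^{\ast}) = R$. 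Stage 2: symmetrically, $h$ is joined to an edge $h^{\ast}$ with $\r(h^{\ast}) = R$. Stage 3: join $g^{\ast}$ to $h^{\ast}$ by a zigzag through edges of range $\supseteq R$. Concatenating the three zigzags yields a zigzag from $g$ to $h$ every edge of which has range $\supseteq R = \r(g) \cap \r(h)$, which is exactly condition (c).

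The hard part, I expect, will be Stage 3: joining two edges $u, v \in \e(N)$ with $\r(u) = \r(v) = R$. One cannot in general pass from $u$ to $v$ by single-coordinate changes that keep the range equal to $R$, because overwriting a coordinate that is the sole carrier of its value would drop that value from the range; but $|R| < n$ forces some coordinate to carry a repeated value, and this single unit of ``scratch space'' turns out to be exactly enough --- by a sorting-type argument (park the duplicated values out of the way, then permute) one joins both $u$ and $v$ to a fixed canonical surjection $n \to R$. Two further technical points have to be dispatched in passing. First, one must make sure that all the edges produced in Stages 1--3 actually belong to $\e(N)$, i.e.\ that the labels $t^k_l(\,\cdot\,)$ arising never vanish; this is where the non-degeneracy implicit in the definition of $\e(N)$ enters, Lemma~\ref{trans} guaranteeing non-vanishing of the $p_{kl}$-labels in the $\mathrm{SC}$ case. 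Second, in the $\mathrm{SC}$ case every use of $N(e \circ [i/j]) = t^i_j N(e)$ at a repetition free edge $e$ must be routed through Lemma~\ref{mylemma}, since such edges are labelled by the $p_{kl}$-chain rather than by $(-)^{\a{A}}$; this is the only essential extra work in the $\mathrm{SC}$ case.
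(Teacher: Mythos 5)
Your overall decomposition --- read off (a) and (b) from the construction, establish the compatibility identity $N(e\circ[i/j])=t^i_jN(e)$ on all of $\e(M)$, then assemble zigzags from single-coordinate moves --- has the right shape, and your Stages 1--2 are sound. But there is a genuine gap at exactly the point where the paper spends almost all of its effort: the compatibility identity at \emph{repetition free} edges in the $\mathrm{SC}$ case. You assert that this is ``precisely what the well-definedness of the $p_{kl}$-labelling and Lemma~\ref{mylemma} give''. It is not. If $g_p=g_m\circ[k,l]$ and $N(g_p)\leq p_{kl}N(g_m)$, then Lemma~\ref{mylemma} only yields $c_iN(g_p)=c_iz$ for $z$ a specific composite such as $t^l_it^k_lt^i_kN(g_m)$; to conclude $N(g_p\circ[i/j])=t^i_jN(g_p)$ you must in addition identify that composite with the label the mosaic actually assigns to $g_p\circ[i/j]$, which is $\sigma^{\a{A}}a$ computed relative to the base edge $f$, not relative to $g_m$. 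That identification is a nontrivial induction along the enumeration $g_0,\ldots,g_N$ with a case analysis on how $\{i,j\}$ meets $\{k,l\}$, and in the subcase $i\neq k,l$, $j=l$ it genuinely requires the Merry-Go-Round equation $s^i_ks^k_ls^l_ic_ix=s^i_ls^l_ks^k_ic_ix$ (a consequence of (Ax10)): the order of substitutions supplied by the factor of $p_{kl}$ does not match the order demanded by the target $[i/j]$, and without reordering, Lemma~\ref{mylemma} produces the wrong composite. Until this induction is carried out, every zigzag step you take out of a repetition free edge is unsupported.

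Your Stage 3 is also only a sketch, and it is harder than necessary. Since every edge of $M$ is of the form $f\circ\tau$, the paper connects a non-repetition-free edge $f\circ\tau$ directly to $f$ by writing $\tau=[i_1/j_1]\circ\cdots\circ[i_m/j_m]$ with $m$ minimal and walking along $f,\ f\circ[i_1/j_1],\ \ldots,\ f\circ\tau$; ranges only shrink along this walk, so every intermediate edge has range containing $\r(f\circ\tau)\supseteq\r(g)\cap\r(h)$, and concatenating two such walks at $f$ gives the required zigzag (a repetition free endpoint $g$ is first pushed to $g\circ[i/j]$ for an $i$ with $g(i)\notin\r(h)$). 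This makes your reduction to a common range $R$ and the subsequent ``sorting with scratch space'' unnecessary; moreover, as written that sorting would require zigzag steps $e\mapsto e(k/w)$ with $w\notin\r(e)$, which your preparatory fact (stated only for moves $e\mapsto e\circ[k/l]$) does not directly license. I would replace Stages 1--3 by the route through $f$, and devote the saved effort to the induction described above.
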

\begin{proof}
Suppose that $\mathrm{K}=\mathrm{G}_n$ and suppose that $f$ is a repetition free sequence. We prove the above statement for this case only, the other cases are similar so we omit their details. The construction of $M$ guarantees that it satisfies conditions (a) and (b) of networks. It remains to show that $M$ also satisfies condition (c). First, we prove the following. 
\begin{equation}\label{cond}
(\forall g\in \e(M)) \ (\forall i,j\in n) \ \ M(g(i/g(j)))=t^i_jM(g).
\end{equation}
Let $i,j\in n$ and let $g\in\e(M)$ be such that $i\not=j$. If $f=g$ then by the definition of $M$ we have $$M(g(i/g(j)))=M(f(i/f(j)))=M(f\circ[i/j])=t^i_jM(f)=t^i_jM(g).$$ Suppose that there is a transformation $\tau\in\Omega_n$ such that $g=f\circ\tau$. Then, $g(i/g(j))=g\circ[i/j]=f\circ\tau\circ[i/j]$. So, by the construction of $M$, we have $M(g(i/g(j)))= (\tau\circ[i/j])^{\a{A}}a=t^i_j \tau^{\a{A}}a=t^i_j M(g)$. Recall the enumeration $g_0,g_1,\ldots, g_N$ of the repetition free sequences given in the definition of $M$. To finish proving \eqref{cond}, it remains to show that
\begin{equation}\label{ind}
(\forall m\in N+1) \ \ M(g_m(i/g_m(j)))=t^i_jM(g_m).
\end{equation}
We do this by induction on $m$. If $m=0$ then we are done as $g_0=f$. Let $1\leq p\leq N$. Assume that \eqref{ind} is true for all $m<p$. We will show that \eqref{ind} is true for $p$, too. Let $g_p=g_m\circ[k,l]$ be such that $m<p$ and suppose that we choose $M(g_p)\leq p_{kl}M(g_m)$.

\noindent{{\bf{Case 1:}} $i\not=k,l$ and $j=k$ or $j=l$.} 

Assume first that $j=l$. Consider the edges:
$$h_1=g_m\circ[i/k], \ \ h_2=g_m\circ[i/k]\circ[k/l] \ \text{ and } \ h_3=g_m\circ[i/k]\circ[k/l]\circ[l/i].$$
Thus, $h_3\equiv_ig_p$ because $g_p=g_m\circ [k,l]$. Hence, $g_p(l)=h_3(l)=h_3(i)$, which means that $h_3=g_p(i/g_p(j))$. By induction hypothesis, $M(h_1)=t^i_kM(g_m)$. None of the sequences $h_1, h_2$ and $h_3$ is repetition free, so we already showed that $$M(g_p(i/g_p(j)))=M(h_3)=t^l_iM(h_2)=t^l_it^k_lM(h_1)=t^l_it^k_lt^i_kM(g_m).$$
By $M(g_p)\leq p_{kl}M(g_m)$ we have $M(g_p)\leq s^i_ks^k_ls^l_ic_i M(g_m)$. Since in $\mathrm{K}_n$ the so-called the Merry Go Round equation $s^i_ks^k_ls^l_ic_i x=s^i_ls^l_ks^k_ic_i x$ is true, then we also have that $M(g_p)\leq s^i_ls^l_ks^k_ic_i M(g_m)$. Now, Lemma~\ref{mylemma} implies that $M(g_p(i/g_p(j)))\leq c_iM(g_p)$. By condition (a) of the networks, we have $M(g_p(i/g_p(j)))\leq c_iM(g_p)\cdot d_{ij}=t^i_jM(g_p)$. The case $j=k$ is completely similar, except that we do not have to use the Merry Go Round equation. 

\noindent{{\bf{Case 2:}} $i\not=k,l$ and $j\not=k,l$.} 

Then $[i/l]\circ[i/j]=[i/j]$. Hence, $g_p(i/g_p(j))=g_p\circ[i/j]=g_p\circ[i/l]\circ[i/j]$. We can use the previous case to conclude that $M(g_p\circ[i/l])=t^i_lM(g_p)$. Note that $g_p\circ[i/l]$ is not repetition free, so we know that $$M(g_p(i/g_p(j)))=M(g_p\circ[i/l]\circ[i/j])=t^i_jt^i_lM(g_p)=t^i_jM(g_p),$$ the last equality follows from Lemma~\ref{andreka}.

\noindent{{\bf{Case 3:}} $i=k$ and $j=l$.} 

Then $g_p(i/g_p(j))=g_p\circ[i/j]=g_p\circ[k/l]=g_m\circ[l/k]$. Thus, by the induction hypothesis, we have $M(g_p(i/g_p(j)))=t^l_kM(g_m)$. We also have $M(g_p)\leq s^i_jc_j M(g_m)$ because of the fact that $M(g_p)\leq p_{kl}M(g_m)$. Hence, by Lemma~\ref{mylemma}, $M(g_p(i/g_p(j)))\leq c_iM(g_p)$. Again, since $M(g_p(i/g_p(j)))\leq d_{ij}$, $M(g_p(i/g_p(j)))=c_iM(g_p)\cdot d_{ij}=t^i_jM(g_p)$. The case $i=k$ and $j\not=l$ is as above. The case $i=l$ is completely analogous.

Therefore, \eqref{ind} is true and thus we have shown that \eqref{cond} is also true. We need also to show that the following holds:
\begin{equation}\label{2}
(\forall \tau\in\Omega_n) \ \text{ there is a zigzag from }f\circ\tau\text{ to }f\text{ in }M. 
\end{equation}
Let $\tau\in\Omega_n$. So, we can suppose that $\tau=[i_1/j_1]\circ\cdots\circ[i_m/j_m]$ with the smallest possible $m$. Consider the edges:
$$h_0=f, \ h_1=h_0\circ[i_1/j_1], \ h_2=h_1\circ[i_2/j_2], \ \ldots, \ h_m=h_{m-1}\circ[i_m/j_m]=f\circ\tau.$$
By \eqref{cond}, it is easy to see that $h_0,\ldots,h_m$ is a zigzag from $f\circ\tau$ to $f$ in $M$. Hence, \eqref{2} is proved. 
Now we are ready to show that the mosaic $M$ satisfies condition (c) in Definition~\ref{netwrok}. Let $g,h\in\e(M)$ be such that $0<\vert\r(g)\cap\r(h)\vert<n$. 
\begin{enumerate}[(I)]
\item Suppose that none of $g$ and $h$ is repetition free. By \eqref{2} we have the following:
\begin{enumerate}
\item[-] There is a zigzag $h_0,\ldots,h_m$ from $g$ to $f$ in $M$.
\item[-] There is a zigzag $w_0,\ldots,w_d$ from $h$ to $f$ in $M$.
\end{enumerate}
It is not hard to see that $h_0,\ldots,h_m=f=w_d,\dots,w_0$ is a zigzag from $g$ to $h$ in $M$. This true because of the facts $\r(g)\subseteq\r(f)$ and $\r(h)\subseteq\r(f)$.
\item Suppose that one of $g$ and $h$ is repetition free, say $g$. Thus, $h$ must not be repetition free, by the assumption $0<\vert\r(g)\cap\r(h)\vert<n$. Also, we can find an element $u\in \r(g)\setminus\r(h)$. Let $i\in n$ be such that $g(i)=u$ and let $j\in n\setminus\{i\}$ (this $j$ exists because $n\geq 2$). Clearly, $\r(g\circ[i/j])\cap\r(h)=\r(g)\cap\r(h)$. Hence, by (I) above, there is a zigzag $h_0,\ldots,h_m$ from $g\circ[i/j]$ to $h$ in $M$. But \eqref{cond} and Lemma~\ref{aib} imply $c_iM(g)=c_iM(g\circ[i/j])$. Thus, $g,h_0,\ldots,h_m$ is a zigzag from $g$ to $h$ in $M$.
\end{enumerate}
Therefore, the mosaic $M$ is indeed a network.
\end{proof}
\section{Games and representability}
The games we use here are games of infinite lengths between two players $\forall$ and $\exists$. These games are basically Banach-Mazur games (see \cite[Problem 43]{game2} and \cite{game1}) in disguise.
\begin{defn}
Let $\alpha$ be an ordinal. We define a game, denoted by $G_{\alpha}(\a{A})$, with $\alpha$ rounds, in which the players $\forall$ and $\exists$ build a chain of pre-networks $\langle N_{\beta}:\beta\in\alpha\rangle$ as follows. In round $0$, $\exists$ starts by letting $N_0=\emptyset$. Suppose that we are in round $\beta\in\alpha$ and assume that each $N_{\lambda}$, $\lambda\in\beta$, is a pre-network. If $\beta$ is a limit ordinal then $\exists$ defines $N_{\beta}=\bigcup\{N_{\lambda}:\lambda\in\beta\}$. If $\beta=\gamma+1$ is a successor ordinal then the players move as follows:
\begin{enumerate}[(a)]
\item $\forall$ chooses an atom $b\in At(\a{A})$, $\exists$ must respond with a pre-network $N_{\beta}\supseteq N_{\gamma}$ containing an edge $g$ with $N_{\beta}(g)=b$.
\item Alternatively, $\forall$ chooses an edge $g\in \e(N_{\gamma})$, an index $i\in n$ and an atom $b\in At(\a{A})$ such that $N_{\gamma}(g)\leq c_ib$. In this case, $\exists$ must respond with a pre-network $N_{\beta}\supseteq N_{\gamma}$ such that for some $u\in\n(N_{\beta})$ we have $g(i/u)\in\e(N_{\beta})$ and $N_{\beta}(g(i/u))=b$.
\end{enumerate}
$\exists$ wins if each pre-network $N_{\beta}$, $\beta\in\alpha$, played during the game is actually a network. Otherwise, $\forall$ wins. There are no draws. 
\end{defn}
What we have defined are the rules of the game. There are many different matches of the game that satisfy the rules. The idea of the game is that the current network is refined in the current round of the game. $\forall$ can challenge $\exists$ to find a suitable refinement in any of the two ways he likes, and she must either do or lose. Since the game then continues from there, her response must itself be refinable in any way, if she is not to lose.

\begin{prop}\label{winning}
Let $\alpha$ be an ordinal. $\exists$ has a winning strategy in the game $G_{\alpha}(\a{A})$. 
\end{prop}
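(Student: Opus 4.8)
The plan is to give $\exists$ an explicit strategy and to check that, if she follows it, every pre-network occurring in the play is in fact a network; by the reduction recorded before Lemma~\ref{aib} this suffices, since $\a{A}$ is assumed complete and atomic. The guiding idea is that $\exists$ never constructs anything by hand: she only ever enlarges the current network $N_\gamma$ by gluing onto it a mosaic $M(f,b)$ (already a network, by Lemma~\ref{mosok}), always taking the new nodes involved to be fresh, i.e.\ outside $\n(N_\gamma)$. Round $0$ is trivial since $\emptyset$ is a network. At a limit round $\beta$, $\exists$ is forced to play $N_\beta=\bigcup_{\lambda\in\beta}N_\lambda$: conditions (a) and (b) of Definition~\ref{netwrok} persist under directed unions, and any two edges of $N_\beta$ already occur together in some $N_\lambda$, which is a network by induction and supplies the needed zigzag.

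\textbf{Move (a).} Let $\forall$ play an atom $b$. The relation ``$b\leq d_{ij}$'' on $n$ is an equivalence relation: reflexivity is (Ax4), symmetry is (Ax5), and transitivity follows from $d_{ik}\cdot d_{kj}\leq d_{ij}$ (a consequence of (Ax5)) together with the atomicity of $b$. Pick a sequence $f$ of length $n$ with range disjoint from $\n(N_\gamma)$ realising exactly this relation, so that $f(i)=f(j)\iff b\leq d_{ij}$; then $M(f,b)$ is defined, is a network by Lemma~\ref{mosok}, and $\exists$ plays $N_\beta=N_\gamma\cup M(f,b)$. Since $\n(N_\gamma)\cap\r(f)=\emptyset$, every pair of edges with nonempty overlapping ranges lies wholly inside one of the two pieces, so conditions (a), (b), (c) for $N_\beta$ follow from those of $N_\gamma$ and of $M(f,b)$; and $f\in\e(M(f,b))$ with label $b$, so this is legal.

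\textbf{Move (b).} Let $\forall$ play $g\in\e(N_\gamma)$, $i\in n$ and an atom $b$ with $N_\gamma(g)\leq c_ib$. By Lemma~\ref{aib}, $c_iN_\gamma(g)=c_ib$, whence $t^i_jN_\gamma(g)=c_ib\cdot d_{ij}=t^i_jb$ for all $j$. If $b\leq d_{ik}$ for some $k\neq i$, put $u=g(k)$ and $f=g\circ[i/k]=g(i/u)$; then $t^i_kN_\gamma(g)=c_ib\cdot d_{ik}\geq b\neq0$, so $t^i_kN_\gamma(g)=b$ by Lemma~\ref{atom}, and by Lemma~\ref{tauclosed}(2) the edge $f$ carries label $b$ if already present in $N_\gamma$ — which it is when $\mathrm{K}\in\{\mathrm{DC},\mathrm{SC}\}$ by closure condition (a), so the possibility $f\notin\e(N_\gamma)$ occurs only for $\mathrm{K}=\mathrm{RC}$. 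If instead $b\not\leq d_{ik}$ for all $k\neq i$, take $u$ outside $\n(N_\gamma)$ and again $f=g(i/u)$. In either sub-case the mosaic precondition $f(k)=f(l)\iff b\leq d_{kl}$ holds: for $k,l\neq i$ it is the chain $f(k)=f(l)\iff g(k)=g(l)\iff N_\gamma(g)\leq d_{kl}\iff c_ib\leq d_{kl}\iff b\leq d_{kl}$ (condition (b) of $N_\gamma$, the identity $c_id_{kl}=d_{kl}$ from (Ax5), and $c_iN_\gamma(g)=c_ib$), while the pairs involving $i$ are handled either by $b\leq d_{ik}$ together with (Ax5), or by $f(i)=u$ being new and $b\not\leq d_{ik}$. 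Thus $M(f,b)$ is a network, and $\exists$ plays $N_\beta=N_\gamma\cup M(f,b)$; this is a well-defined pre-network, since an edge common to the two pieces must avoid $u$, hence equal some $g\circ\tau$ with $i\notin\r(\tau)$, and there the two candidate labels agree because $\tau^{\a{A}}N_\gamma(g)=\tau^{\a{A}}b$ — which follows from $t^i_jN_\gamma(g)=t^i_jb$ by writing $\tau=[i/j]\circ\tau$ and applying Lemma~\ref{andreka}.

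\textbf{The main point.} What remains — and what I expect to be the only substantial step — is to show that $N_\beta=N_\gamma\cup M(f,b)$ in move (b) again satisfies condition (c). Conditions (a), (b) are immediate, and a pair of edges both inside $N_\gamma$ or both inside $M(f,b)$ is dealt with by those networks. So let $e\in\e(M(f,b))\setminus\e(N_\gamma)$ and $h\in\e(N_\gamma)$ with $0<\vert\r(e)\cap\r(h)\vert<n$. One checks that $\r(e)\cap\r(h)\subseteq\r(g)$ — either because $e$ uses the fresh node $u$, so $\r(e)\cap\r(h)\subseteq\r(f)\setminus\{u\}\subseteq\r(g)$, or, in the reuse sub-case, simply because $\r(f)\subseteq\r(g)$. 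The zigzag from $e$ to $h$ is then assembled as: a zigzag inside $M(f,b)$ from $e$ to $f$ (supplied by the proof of Lemma~\ref{mosok}), then the single step $f=g(i/u)\equiv_i g$, legal since $c_iM(f,b)(f)=c_ib=c_iN_\gamma(g)$ by Lemma~\ref{aib}, then a zigzag inside $N_\gamma$ from $g$ to $h$ given by condition (c) of $N_\gamma$ — every edge of this concatenation containing $\r(e)\cap\r(h)$ because that set sits inside $\r(f)$ and inside $\r(g)$. The delicate bookkeeping is exactly the repetition-free corner cases: when $\r(g)$ or $\r(h)$ has size $n$ the ``$<n$'' clause bars a direct appeal to condition (c) of $N_\gamma$, so one must first slide $g$ (respectively $h$) by an extra $\equiv$-step — again legitimate by Lemma~\ref{aib} — to a sequence of smaller range before invoking it; this is where the argument is most intricate, though routine given Lemmas~\ref{aib}, \ref{atom}, \ref{tauclosed} and \ref{mosok}. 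With this strategy every $N_\beta$ is a network, so $\exists$ wins $G_\alpha(\a{A})$.
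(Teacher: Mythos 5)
Your strategy is essentially the paper's own: respond to each challenge by gluing a fresh mosaic $M(f,b)$ onto $N_\gamma$ (reusing an existing node exactly when $b\leq d_{ij}$ forces it), verify well-definedness of the overlap via $c_iN_\gamma(g)=c_ib$ and Lemma~\ref{andreka}, and restore condition (c) by concatenating zigzags through the junction edge $f\equiv_i g$. The only (cosmetic) divergence is that you route the cross-piece zigzag as $e,\ldots,f,g,\ldots,h$ while the paper first slides the new edge into $N_\gamma$ via $h'=h\circ[j/k]$ and appends $h$ at the end; your treatment of the repetition-free corner cases is sketched at about the same level of detail as the paper's.
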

\begin{proof}
Let $\alpha$ be an ordinal and let $\beta\in \alpha$. Clearly, $\exists$ always wins in the round $\beta$ if $\beta=0$ or $\beta$ is a limit ordinal. So, we may suppose that $\beta=\gamma+1$ is a successor ordinal. We also may assume inductively that $\exists$ has managed to guarantee that $N_{\gamma}$ is a network. We consider the possible moves that $\forall$ can make.
\begin{enumerate}[(a)]
\item Suppose that $\forall$ chooses an atom $b\in At(\a{A})$. If there is an edge in $N_{\gamma}$ with the required conditions, then $\exists$ lets $N_{\beta}=N_{\gamma}$. Otherwise, she picks brand new nodes $f_0,\ldots,f_{n-1}$ such that $$(\forall i,j\in n) \ \ (f_i=f_j\iff b\leq d_{ij}).$$
Let $f=(f_0,\ldots, f_{n-1})$. She defines $N_{\beta}=N_{\gamma}\cup M(f,a)$, where $M(f,a)$ is the mosaic generated by $f$ and $a$. Note that our construction guarantees that $\n(N_{\gamma})\cap\n(M(f,a))=\emptyset$, so $N_{\beta}$ is well defined. Now, Lemma~\ref{mosok} guarantees that $N_{\beta}$ is a network.
\item Alternatively, suppose that $\forall$ chooses an edge $f\in\e(N_{\gamma})$, an index $i\in n$ and an atom $b\in At(\a{A})$ such that $N_{\gamma}(f)\leq c_ib$. Again, if there is a node $u\in\n(N_{\gamma})$ such that $f(i/u)\in\e(N_{\gamma})$ and $N_{\gamma}(f(i/u))=b$, then $\exists$ lets $N_{\beta}=N_{\gamma}$. Suppose that such $u$ does not exist. $\exists$'s strategy goes as follows. Let $a=N_{\gamma}(f)$, so we have $c_ia=c_ib$ by Lemma~\ref{aib}. 

Suppose that there is some $j\in n\setminus\{i\}$ such that $b\leq d_{ij}$. Then $b\leq c_ia\cdot d_{ij}$ and thus $b=t^i_ja$. If $f(i/f_j)\in\e(N_{\gamma})$ then we have $N_{\gamma}(f(i/f_j))=t^i_ja=b$, which contradicts the assumptions. So, we may assume that $f(i/f_j)\not\in\e(N_{\gamma})$ (hence, $\mathrm{K}_n=\mathrm{Crs}_n$ because $N_{\gamma}$ is a network). $\exists$ defines $N_{\beta}=N_{\gamma}\cup M(f(i/f_j),b)$. Thus, $N_{\beta}$ is indeed well defined and it is not hard to see that it is a network.

Suppose that $b\not\leq d_{ij}$ for every $j\in n\setminus\{i\}$. In this case, $\exists$ picks brand new node $u$ and then she lets $N_{\beta}=N_{\gamma}\cup M(f(i/u),b)$. We need to check that $N_{\beta}$ is well defined. Let $g\in \e(N_{\gamma})\cap\e(M(f(i/u),b))$. Remember that $u$ was a brand new node, so the existence of such $g$ means that $\mathrm{K}\in\{\mathrm{D},\mathrm{G}\}$ and that there are $j\in n\setminus\{i\}$ and $\tau\in\Omega_n$ such that $g=f(i/u)\circ[i/j]\circ\tau$. Hence, $g=f\circ[i/j]\circ\tau$. By Lemma~\ref{tauclosed}, it follows that $N_{\gamma}(g)=\tau^{\a{A}}t^i_ja=\tau^{\a{A}}(c_ia\cdot d_{ij})$. Let $M\myeq M(f(i/u),b)$, by the  construction of the mosaic $M$, we also have $M(g)=\tau^{\a{A}}t^i_jb=\tau^{\a{A}}(c_ib\cdot d_{ij})$. It remains to see that $N_{\gamma}(g)=M(g)$, which is true  because $c_ia=c_ib$. Thus, $\exists$ could manage to guarantee that $N_{\beta}$ is well defined. Now, we show that $N_{\beta}$ is actually a network. 

Lemma~\ref{mosok} and the induction hypothesis guarantee that $N_{\beta}$ satisfies conditions (a) and (b) of networks. For the same reasons, it is apparent that $N_{\beta}$ satisfies condition (c) for any two edges that both lie in $N_{\gamma}$ or in the mosaic $M$. Let $g\in\e(N_{\gamma})$ and let $h\in\e(M)\setminus\e(N_{\gamma})$. Suppose that $0<\vert\r(g)\cap\r(h)\vert<n$. We need to find a zigzag from $g$ to $h$ in the pre-network $N_{\beta}$.

Note that $u\in \r(h)$. Let $j\in n$ be such that $h(j)=u$ and choose an index $k\in n\setminus\{j\}$. Let $h'=h\circ[j/k]$. Thus, $h'\in\e(N_t)$ and $\r(h)\cap\r(g)=\r(h')\cap\r(g)$. By the induction hypothesis, there is a zigzag $w_0,\ldots,w_m$ from $g$ to $h'$ in $N_{\beta}$. Remember the fact that $N_{\beta}(h')=M(h')=t^j_kM(h)=t^j_kN_{\beta}(h)$. Hence, by Lemma~\ref{aib}, we have $c_jN_{\beta}(h')=c_jN_{\beta}(h)$. Therefore, $w_0,\ldots,w_m,h$ is a zigzag from $g$ to $h$ in $N_{\beta}$ as required. 
\end{enumerate}
Therefore, if $\exists$ plays according to the strategy above she can win any play of the game $G_{\alpha}(\a{A})$, regardless of what moves $\forall$ makes.
\end{proof}
Roughly, $\exists$'s winning strategy goes as follows. In each step, she adds a whole mosaic, and she uses the axioms to label the elements of the mosaics by relevant atoms. Now we will use this to show that the algebra $\a{A}$ can be represented into a full algebra whose unit is a union of mosaics. 
\begin{prop}\label{prop}
$\a{A}$ is representable, i.e. there is $\a{B}\in\mathrm{Ks}_n$ such that $\a{A}\cong\a{B}$.
\end{prop}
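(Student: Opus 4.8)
The plan is to run a single play of the game $G_{\alpha}(\a{A})$ for a suitably long ordinal $\alpha$, using $\exists$'s winning strategy from Proposition~\ref{winning}, arrange the moves so that every conceivable demand is eventually met, take the union of the resulting chain of networks, and read off the representation from the limit network. Concretely, I would first choose $\alpha$ to be a large enough ordinal (of cofinality $>|\a{A}|$, e.g.\ $\alpha=(2^{|At(\a{A})|})^{+}\cdot\omega$ will do comfortably) and schedule $\forall$'s moves by a standard bookkeeping/dovetailing argument so that along the play: (i) for every atom $b\in At(\a{A})$ there is a round where $\forall$ plays move (a) with that $b$, and (ii) for every edge $g$, every $i\in n$ and every atom $b$ with $N(g)\leq c_ib$ appearing in some $N_{\beta}$, there is a later round where $\forall$ plays move (b) with exactly that data. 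Since each $N_{\beta}$ has finitely many nodes and edges and there are only $|\a{A}|$-many atoms, the set of outstanding demands at any stage has size $\le|\a{A}|$, so the bookkeeping closes off by the choice of $\alpha$. Let $N=\bigcup_{\beta\in\alpha}N_{\beta}$.

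Next I would verify that $N$ is itself a network. Conditions (a) and (b) of Definition~\ref{netwrok} are preserved under unions of chains (they are ``local'' to finitely many nodes/edges and hold in each $N_{\beta}$ by Proposition~\ref{winning}), and condition (c) for a pair $f,g\in\e(N)$ holds because both $f,g$ already lie in some common $N_{\beta}$, where a zigzag exists; that zigzag is a finite object, hence survives into $N$. So $N$ is a network, and moreover it is \emph{saturated} in the sense that move (b) can always be answered \emph{within} $N$: if $f\in\e(N)$, $i\in n$, $b\in At(\a{A})$ and $N(f)\leq c_ib$, then by the scheduling there is some $u\in\n(N)$ with $f(i/u)\in\e(N)$ and $N(f(i/u))=b$.

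Then I would define the representation. Let $V=\e(N)\subseteq {}^{n}\n(N)$ and define $h:\a{A}\to\mathcal{P}(V)$ by
\[
h(x)=\{f\in V: N(f)\leq x\}.
\]
Because $\a{A}$ is atomic, $h$ is a Boolean homomorphism: $N(f)$ is an atom, so $N(f)\leq x$ or $N(f)\leq -x$ but not both, giving $h(-x)=V\setminus h(x)$, and $h$ plainly respects $+,\cdot$. For the diagonals, condition (b) of networks gives exactly $f\in h(d_{ij})\iff N(f)\leq d_{ij}\iff f(i)=f(j)\iff f\in D_{ij}^{[V]}$. For the cylindrifications: if $f\in C_i^{[V]}h(x)$ then $f\equiv_i g$ for some $g\in\e(N)$ with $N(g)\leq x$, and Lemma~\ref{tauclosed}(1) yields $c_iN(f)=c_iN(g)\geq N(g)$, whence $N(f)\cdot c_ix\neq 0$, and since $N(f)$ is an atom, $N(f)\leq c_ix$, i.e.\ $f\in h(c_ix)$; conversely if $N(f)\leq c_ix$, write $b$ for any atom below $x$ with $N(f)\leq c_ib$ (available since $c_ix=\sum_{b\leq x,\,b\in At}c_ib$ by additivity and completeness), then saturation gives $u$ with $f(i/u)\in\e(N)$ and $N(f(i/u))=b\leq x$, so $f\in C_i^{[V]}h(x)$. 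Hence $h$ is a homomorphism into $\a{P}(V)$. Injectivity is immediate from atomicity: for $0\neq x$ pick an atom $b\leq x$; by the scheduling $\forall$ played $b$ in move (a) at some round, so some $f\in\e(N)$ has $N(f)=b\leq x$, i.e.\ $h(x)\neq\emptyset$. Thus $\a{A}\cong h(\a{A})\leq\a{P}(V)$, so $h(\a{A})\in\mathrm{RCs}_n$. Finally, when $\mathrm{K}=\mathrm{DC}$ (resp.\ $\mathrm{SC}$), condition (a)(i) (resp.\ (a)(ii)) of networks — inherited by the limit $N$ — says precisely that $V=\e(N)$ is diagonalizable (resp.\ diagonalizable and permutable), so $h(\a{A})\in\mathrm{DCs}_n$ (resp.\ $\mathrm{SCs}_n$); this gives $\a{B}=h(\a{A})\in\mathrm{Ks}_n$ as claimed.

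The main obstacle is the bookkeeping step: one must be careful that the demands of type (b) which are generated \emph{during} the play (new edges created by $\exists$'s mosaic-additions spawn new atoms $b$ with $N(f)\leq c_ib$) are all eventually scheduled, so the argument needs the cofinality of $\alpha$ to exceed the number of such demands that can ever appear, together with the observation that at each limit stage no \emph{new} finite demand is lost because each $N_{\beta}$ for $\beta<\alpha$ is already a network with only finitely many edges. Everything else — preservation of the network conditions under chain unions, and the verification that $h$ is a homomorphism — is routine once saturation is in hand.
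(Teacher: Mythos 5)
Your proposal is correct and follows essentially the same route as the paper: run the game with $\exists$ using her winning strategy while $\forall$'s moves are scheduled so that every atom and every cylindrifier demand is eventually played, take the union of the resulting chain, and define the embedding by $x\mapsto\{f: N(f)\leq x\}$, verifying diagonals via condition (b), cylindrifications via Lemma~\ref{tauclosed}(1) together with saturation, and injectivity via atomicity. The only difference is presentational (you form the limit network explicitly and spell out the bookkeeping), not mathematical.
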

\begin{proof}
Let $\alpha$ be an ordinal (large enough) and consider a play $\langle N_{\beta}:\beta\in\alpha\rangle$ of $G_{\alpha}(\a{A})$ in which $\exists$ plays as in Proposition~\ref{winning}, and $\forall$ plays every possible move at some stage of play. That means,
\begin{enumerate}[(G 1)]
\item each atom $a\in At(\a{A})$ is played by $\forall$ in some round, and
\item for every $b\in At(\a{A})$, every $\beta\in\alpha$, and each $f\in \e(N_{\beta})$ and every $i\in n$ with $N_{\beta}(f)\leq c_ib$, $\forall$ plays $b$, $i$, $f$ in some round.
\end{enumerate}
Let $U=\bigcup\{\n(N_{\beta}):\beta\in\alpha\}$ and let $V=\bigcup\{\e(N_{\beta}):\beta\in\alpha\}\subseteq {^nU}$. By condition (a) of Definition~\ref{netwrok}, we can guarantee that $\a{P}(V)\in\mathrm{Ks}_n$. Thus, it remains to show that $\a{A}$ is embeddable into $\a{P}(V)$. For this, we define the following function. For each $x\in \a{A}$, let
$$\Psi(x)=\{f\in V: \exists\beta\in\alpha \ (f\in \e(N_{\beta}) \text{ and }N_{\beta}(f)\leq x)\}.$$
It is not hard to see that $\Psi$ is a Boolean homomorphism. Also, (G 1) above implies that $\Psi$ is one-to-one. We check cylindrifications and diagonals. Let $x\in\a{A}$. Then, for each $f\in V$, we have
\begin{eqnarray*}
f\in \Psi(c_ix)&\iff& \exists\beta\in\alpha\left( f\in N_{\beta} \text{ and }N_{\beta}(f)\leq c_ix\right)\\
&\iff& \exists g\in V \ \exists\beta\in\alpha \ (f,g\in N_{\beta}, f\equiv_i g, N_{\beta}(g)\leq x)\\
&\iff& \exists g\in V \left( f\equiv_i g\text{ and }g\in \Psi(x)\right)\\
&\iff& f\in C_i^{[V]}\Psi(x).
\end{eqnarray*}
The second $\iff$ follows by (G 2) and Lemma~\ref{tauclosed} (1). For the diagonals, let $i,j\in n$, then by the second condition of networks
\begin{eqnarray*}
\Psi(d_{ij})&=&\{f\in V: \exists\beta\in\alpha \ (f\in \e(N_{\beta}) \text{ and }N_{\beta}(f)\leq d_{ij})\}\\
&=&\{f\in V:f(i)=f(j)\}\\
&=&D_{ij}^{[V]}.
\end{eqnarray*}
Therefore, $\a{A}$ is isomorphic to a subalgebra $\a{B}\subseteq\a{P}(V)$ as desired. 
\end{proof}
\begin{proof}[Proof of Main Theorem~\ref{main}] Let $\mathrm{K}\in\{\mathrm{RC},\mathrm{DC},\mathrm{SC}\}$. It is easy to see that every concrete algebra in $\mathrm{Ks}_n$ satisfies the axioms defining $\mathrm{K}_n$. Conversely, let $\a{A}\in\mathrm{K}_n$, we need to show that $\a{A}\in\mathbf{I}\mathrm{Ks}_n$. By \cite[Theorem 2.18]{tarski}, $\a{A}$ can be embedded into a complete and atomic $\a{A}^{+}\in\mathrm{K}_n$. Hence, Proposition~\ref{prop} implies that $\a{A}^{+}$ is representable. Therefore, $\a{A}$ is representable as well. 
\end{proof}
\begin{mthm}Let $n\geq 2$ be a finite ordinal and let $\mathrm{K}\in\{\mathrm{RC},\mathrm{DC},\mathrm{SC}\}$. Then, every $\a{A}\in\mathrm{K}_n$ is completely representable into an algebra $\a{B}\in\mathbf{I}\mathrm{Ks}_n$. That means, $\a{A}$ is embeddable into $\a{B}$ via an embedding that preserves infinite sums and infinite products. 
\end{mthm}
\begin{proof}
See the proofs of Proposition~\ref{prop} and Main Theorem~\ref{main}. 
\end{proof}
\section{Other networks}
If $n=2$ then one can easily see that (Ax7) follows from (Ax0)-(Ax6). Suppose that $n\geq 3$. H. Andr\'eka and I. N\'emeti showed that the class $\mathrm{RCs}_n$ can not be characterized by finitely many equations, c.f. \cite[Theorem 5.5.13]{hmt2} and \cite[Theorem 9.3]{monk}, thus in this case (Ax7) can not be replaced with a finite set of axioms. However, in \cite{andthm} and \cite{and2000}, axiom (Ax7) was omitted from the characterizations of the classes $\mathrm{DCs}_n$ and $\mathrm{SCs}_n$, and hence it was shown that these classes are finitely axiomatizable.

Although, axiom (Ax7) is essential in the proof presented in the previous sections, this axiom can be omitted from the definitions of $\mathrm{DC}_n$ and $\mathrm{SC}_n$. It could be possible to give a syntactical proof for this fact, but we prefer to introduced some modifications of the networks that would allow us to proceed without the need of (Ax7). Let $\mathrm{DC}_n^{-}$ and $\mathrm{SC}_n^{-}$ be the resulting classes after deleting axiom (Ax7) from the definitions of $\mathrm{DC}_n$ and $\mathrm{SC}_n$, respectively. Suppose that $\mathrm{K}\in\{\mathrm{DC}^{-},\mathrm{SC}^{-}\}$ and let $\a{A}\in\mathrm{K}_n$ be complete and atomic. We show that $\a{A}$ is representable.
\begin{defn}
A `modified' network $N$ is a pre-network that satisfies the following conditions for each $f\in\e(N)$ and each $i,j\in n$.
\begin{enumerate}[(a)]
\item $\e(N)$ is diagonalizable, and $\e(N)$ is permutable iff $\mathrm{K}=\mathrm{SC}^{-}$.   
\item $N(f)\leq d_{ij}\iff f(i)=f(j)$.
\item $N(f\circ[i/j])=t^i_jN(f)$.
\end{enumerate}
\end{defn}
Let $N$ be a modified network. Let $i\in n$ and let $f,g\in \e(N)$ be such that $f\equiv_ig$. We prove that $c_iN(f)=c_iN(g)$. Let $j\in n$ be such that $j\not=i$. Note that $g\circ[i/j]=f\circ[i/j]\in\e(N)$. By condition (c) of the above definition and Lemma~\ref{aib}, it follows that 
$$c_iN(f)=c_iN(f\circ[i/j])=c_iN(g\circ[i/j])=c_iN(g).$$
Hence, we have shown that $N$ satisfies all the items of Lemma~\ref{tauclosed}. Note that we did not use axiom (Ax7). Let $\alpha$ be an ordinal. We also define the modified game $G'_{\alpha}(\a{A})$ to be as same as $G_{\alpha}(\a{A})$ except that the players now have to build modified networks. It is not hard to see that if $\exists$ plays with the same strategy given in Proposition~\ref{winning} then she will win any game regardless of what moves $\forall$ can make. It is quite straightforward to see that the mosaics in this case are also modified networks (see the proof of \eqref{cond}). Now the proof of Proposition~\ref{prop} works verbatim to show that $\a{A}$ is representable. 

We note that the modified networks and the modified games can be used also to prove the representability of some variants of Pinter's algebras and the quasi polyadic algebras. For more details, one can see \cite{rel}. Games and networks were also used to reprove similar representability results for variants of relation algebras \cite[Theorem 7.5]{HHB} and \cite{me}.


\begin{thebibliography}{99}
\bibitem{tarski}
B. J{\'o}nsson and A. Tarski (1951).
\newblock Boolean algebras with operators I.
\newblock American Journal of Mathematics {73}, pp. 891--939.
\bibitem{game1}
J. C. Oxtoby (1957).
\newblock The Banach–Mazur game and Banach category theorem, Contribution to the Theory of Games III. 
\newblock Annals of Mathematical Studies 39, Princeton, pp. 159–163
\bibitem{hmt1}
L. Henkin, J. D. Monk and A. Tarski (1971).
\newblock Cylindric Algebras I. 
\newblock Studies in logic and the foundation of mathematics, volume 64.
\newblock North Holland.
\bibitem{game2}
R.D. Mauldin (1981).
\newblock The Scottish Book: Mathematics from the Scottish Caf\'e.
\newblock Birkhauser-Verlag, Boston-Basel-Stuttgart.  
\bibitem{hmt2}
L. Henkin, J. D. Monk and A. Tarski (1985).
\newblock Cylindric Algebras II. 
\newblock Studies in logic and the foundation of mathematics, volume 115.
\newblock North Holland.
\bibitem{resthm}
D. Resek and R. J. Thompson (1988)
\newblock Characterizing relativized cylindric algebras.
\newblock In: H. Andr\'eka, D. Monk and I. N\'emeti, eds, (1988). Algebraic Logic. Colloquia Mathematica Societatis J\'anos Bolyai, volume 54, pp. 519-538.
\bibitem{andthm}
H. Andr\'eka and R. J. Thompson (1988).
\newblock A stone type representation theorem for algebras of relations of higher rank.
\newblock Transaction of the American Mathematical Society, 309 (2), pp. 671--682.
\bibitem{marxphd} M. Marx (1995).
\newblock Algebraic relativization and arrow logic.
\newblock Ph.D thesis. University of Amsterdam, ILLC
dissertation Series.
\bibitem{marx} M. Marx and Y. Venema (1997).
\newblock Multi-dimensional modal logic.
\newblock Applied Logic Series, volume 4. 
\newblock Kluwer Academic Publishers.
\bibitem{monk}
J. D. Monk (2000).
\newblock An introduction to cylindric set algebras.
\newblock Logic Journal of the IGPL, 8 (4), pp. 451-492.
\bibitem{and2000}
H. Andr\'eka (2001).
\newblock A finite axiomatization of locally square cylindric-relativized set algebras.
\newblock Studia Scientiarum Mathematicarum Hungarica 38, pp. 1-11.
\bibitem{HH} R. Hirsch and I. Hodkinson (1997). 
\newblock Step by step-building representations in algebraic logic. 
\newblock Journal of Symbolic Logic, 62 (1), pp. 225-279.
\bibitem{HHB} R. Hirsch and I. Hodkinson (2002).
\newblock Relation algebras by games. \newblock Studies in logic and the foundation of mathematics, volume 147.
\newblock Elsevier, North Holland.
\bibitem{rel} M. Khaled and T. Sayed Ahmed (2013).
\newblock Building relativized representations using games.
\newblock Preprint, arXiv:1304.1404 [math.LO].
\bibitem{me}
T. Aslan and M. Khaled (2018).
\newblock Stone type representation theorems via games.
\newblock Colloquium Mathematicum, in press. 
\end{thebibliography}
\end{document}